\documentclass[11pt]{article}
\usepackage[utf8]{inputenc}
\usepackage{amsmath,amsfonts,amssymb}
\usepackage{amsthm}
\usepackage{latexsym}
\usepackage[noadjust]{cite}
\usepackage{graphicx}
\usepackage{xcolor}
\usepackage{dirtytalk}
\usepackage{mathtools}
\usepackage[margin=1in]{geometry}
\usepackage{thm-restate}
\usepackage{enumitem}
\usepackage[linesnumbered,ruled]{algorithm2e}
\usepackage[colorlinks=true, allcolors=blue]{hyperref}
\usepackage[nameinlink,capitalise]{cleveref}
\hypersetup{
    citecolor={violet}
}

\newtheorem{theorem}{Theorem}[section]
\newtheorem{corollary}[theorem]{Corollary}
\newtheorem{lemma}[theorem]{Lemma}

\newtheorem{claim}[theorem]{Claim}
\theoremstyle{definition}
\newtheorem{definition}[theorem]{Definition}
\newtheorem{remark}[theorem]{Remark}

\newtheorem{conjecture}[theorem]{Conjecture}

\newcommand{\F}{\mathbb{F}}

\newcommand{\R}{\mathbb{R}}
\newcommand{\Z}{\mathbb{Z}}

\newcommand{\calS}{\mathcal{S}}

\newcommand{\zo}{\{0, 1\}}

\newcommand{\eps}{\epsilon}

\newcommand{\wt}{\mathrm{wt}}
\newcommand{\CL}{\mathrm{CL}}

\newcommand{\Supp}{\mathrm{Supp}}

\newcommand{\supp}{\operatorname{supp}}

\newcommand{\one}{\mathbf 1}
\newcommand{\NRD}{\operatorname{NRD}}

\title{Multiplicative error set system sparsification:\\A simpler proof via chain length contraction}

\author{Joshua Brakensiek\thanks{Department of Electrical Engineering and Computer Sciences, University of California, Berkeley. Supported in part by the Simons Investigator award of Venkatesan Guruswami and NSF grants CCF-2211972 and DMS-2503280.} \and Venkatesan Guruswami\thanks{Simons Institute for the Theory of Computing and the University of California, Berkeley. Supported in part by
a Simons Investigator award and NSF award CCF-2211972.} \and Aaron Putterman\thanks{School of Engineering and Applied Sciences, Harvard University. Supported in part by the Simons Investigator Awards of Madhu Sudan and Salil Vadhan and AFOSR award FA9550-25-1-0112.}}

\date{\today}

\begin{document}

\maketitle

\begin{abstract}
The chain length of a set family $\mathcal{S} \subseteq 2^{[m]}$ is the largest ascending sequence of sets in containment order in the union-closure of $\mathcal S$. 
In this work, we provide a significantly simpler and more optimal characterization of the sparsifiability of set systems in terms of their chain length, improving on the work of Brakensiek and Guruswami [STOC 2025]. Our proof relies on a generalization of Karger's [SODA 1993] famous contraction algorithm and its recent linear algebraic extensions [Khanna-Putterman-Sudan SODA 2024], and our resulting bounds show that, just as VC dimension characterizes the \emph{additive sparsifiability} of a set system, chain length governs the \emph{multiplicative sparsifiability}. As a corollary, we obtain improved bounds for weighted CSP sparsification. 
\end{abstract}

\section{Introduction}

In theoretical computer science, sparsification describes a range of methods for reducing the storage size of a mathematical object while preserve some of its essential qualities. In this paper, we study the sparsification of set systems $\mathcal S \subseteq 2^{[m]}$ where $[m] := \{1, 2, \hdots, m\}$ is a finite set of atoms. Here, we define sparsification analogously to that of Karger's definition for graph cuts~\cite{DBLP:conf/soda/Karger93}, where we find a reweighting $w : [m] \to \R_{\ge 0}$ of the atoms such that for every set $S \in \mathcal S$, its weight before and after the reweighting match up to a multiplicative error of $1 \pm \eps$ (see Section~\ref{sec:prelim} for a formal definition). In general, the goal is to find a map $w$ such that the support $\supp(w) := \{i \in [m] : w(i) \neq 0\}$ is as small as possible.

This framework precisely recover the graph cut sparsification framework of Karger, if we let $[m]$ represent the set of edges of an undirected graph $G = (V, E)$, and let $\mathcal S$ be the family of all possible cuts of $G$ (i.e., sets of the form $(A \times (V \setminus A)) \cap E$, where $A \subseteq V$. In this paper, we prove general bounds on the sparsifiability of \emph{all} set systems, even when the initial atoms $[m]$ are also weighted.

The general study of set system sparsifiers was initiated by Brakensiek and Guruswami~\cite{brakensiek2025redundancy} to help resolve questions in the emerging area of \emph{CSP sparsification}, see Section~\ref{subsec:related-work} for further background. In particular, they showed that the optimal size of a (weighted) set system sparsifier is closely related to the \emph{chain length} (coined by Bessiere, Carbonnel, and Katsirelos~\cite{bessiere2020Chain}) of the set system. For a set system $\mathcal S \subseteq 2^{[m]}$, we define its chain length $\CL(\mathcal S)$ to be the length of the longest ascending chain of sets $S_1 \subsetneq \cdots \subsetneq S_{\CL(\mathcal S)}$ contained in $\bigcup \mathcal S$, the closure of $S$ with respect to set union--see Section~\ref{sec:prelim} for a formal definition. We now state the main result of Brakensiek and Guruswami~\cite{brakensiek2025redundancy} on weighted set system sparsification.

\begin{theorem}[Theorem~1.4 of \cite{brakensiek2025redundancy}]\label{thm:BG}
     For a set system $\mathcal S \subseteq 2^{[m]}$, weights $w: [m] \rightarrow \R_{\geq 0}$ and $\eps > 0$, there exists a $(1 \pm \eps)$ sparsifier of $\mathcal S$ which retains only 
     \[
     O \left ( \frac{ \CL(\mathcal S) \log^6 m}{\eps^2} \right )
     \]
     many atoms. 
\end{theorem}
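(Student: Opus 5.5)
We follow the Karger / Khanna--Putterman--Sudan template: importance sampling driven by a cut-counting bound, with the union-closure chain length playing the role that the number of vertices plays for graphs and the dimension plays for codes.

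\textbf{Reductions.} First we reduce to the case where $\mathcal S$ is union-closed. Replacing $\mathcal S$ by its union closure $\bigcup\mathcal S$ leaves $\CL$ unchanged and only adds sets to preserve, so it is harmless. Next we discretize the weights: round each $w(i)$ to a power of $(1+\eps)$ and discard atoms whose weight is below $\eps/m^{2}$ times the largest weight. The discarded atoms perturb every set's weight by at most a $(1\pm\eps)$ factor, because any nonempty set containing a heavy atom dominates them. The usual bucketing then splits the problem into $O(\log m/\eps)$ weight classes, each of which can be treated as (nearly) unweighted with multiplicities. This bucketing step is where some of the polylog factors will be spent.

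\textbf{Counting lemma (chain-length contraction).} This is the core step. For a union-closed $\mathcal S$ with chain length $k$ and minimum set weight $\lambda$, we claim
\[
\bigl|\{S\in\mathcal S : w(S)\le \alpha\lambda\}\bigr| \le m^{O(\alpha k)} .
\]
The proof is a randomized contraction process. Repeatedly pick an atom $i$ with probability proportional to $w(i)$, and "contract" it: delete every set containing $i$, and restrict the system to $[m]\setminus\{i\}$. Fix a target set $S$ of weight at most $\alpha\lambda$ that we want to survive.

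The analogue of the vertex count decreasing is that the chain length decreases under contraction. If every set in the current family has weight at least $\lambda$, then sampling an atom outside $S$ happens with probability at least $1-\alpha\lambda/w([m])$. To make this large, we need $w([m])$ to be large relative to $\lambda$ times the current chain length. We would establish $w([m]) \ge (\text{remaining chain length})\cdot\lambda/2$, or handle it via an averaging argument.

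Then we show that each contraction lowers the chain length of the surviving family by at least one, at least in expectation over a suitable chosen step. Mimicking KPS, we run the process for $k$ steps. Once the chain length is $O(\alpha)$, we output a uniformly random remaining set. The survival probability of $S$ comes out to $\prod_j (1-2\alpha/j) \ge k^{-O(\alpha)}$, divided by the number of remaining sets. The remaining family has constant chain length, but it could still have up to $m^{O(\alpha)}$ sets, since chains of length $c$ over $m$ atoms bound the family size by $m^{O(c)}$. Together these give the claim.

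The step we expect to be the main obstacle is exactly establishing that a weight-proportional random atom shrinks the chain length while bounding $w([m])$ from below. Our first attempt would take a maximal chain $S_1\subsetneq\dots\subsetneq S_k$ and use the $k$ disjoint layers $S_{j+1}\setminus S_j$. Each layer should witness weight, but minimal sets need not align with the layers. If this fails, we fall back to a weaker per-step probability and lose extra log factors, which is acceptable given the $\log^6 m$ slack in the statement.

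\textbf{Sampling.} Given the counting bound, we define a strength- or importance-based sampling rate $p_i \approx \min(1, C k\log m/(\eps^2\lambda_i))$, where $\lambda_i$ is the minimum weight of a set containing $i$ within an appropriate sub-family, following the Benczúr--Karger decomposition. We sample and reweight by $1/p_i$. For each $\alpha$, Chernoff bounds combined with a union bound over the $m^{O(\alpha k)}$ sets of weight about $\alpha\lambda$ give $(1\pm\eps)$ error with high probability. Summing the $p_i$, via a charging argument over the nested strength components (at most $k$ levels, each with total "strength mass" bounded), gives $O(k\log^{c} m/\eps^2)$ expected atoms. Repeating this $O(\log m)$ times to halve the support, together with the weight bucketing, yields the stated $O(\CL(\mathcal S)\log^6 m/\eps^2)$ bound.
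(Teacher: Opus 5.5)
Your template (contraction with chain length playing the role of the vertex count, then importance sampling) is the paper's route to this statement, which the paper proves in the stronger form of Corollary~\ref{cor:weightedCLSparsifier}. But your counting lemma, which is the crux, uses the wrong parameter.

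\textbf{The counting lemma.} As literally stated, $m^{O(\alpha k)}$ is vacuous: it already follows from $|\mathcal S|\le(m+1)^{\CL(\mathcal S)}$ (Claim~\ref{clm:CLVCdim}). The rates $p_i\approx k\log m/(\eps^2\lambda_i)$ it would justify carry an extra factor $k$, so with $k$ strength levels your final count becomes quadratic in $\CL(\mathcal S)$. The bound your contraction sketch actually aims at, $m^{O(\alpha)}$, is false when $\lambda$ is the minimum set weight. It fails exactly at the step you flagged: for set systems there is no inequality of the form $w(\Supp)\gtrsim\lambda\cdot\CL$.

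Take unit weights and $\mathcal S=\{B\cup J:\emptyset\neq J\subseteq R\}$ with $|B|=\lambda$ and $|R|=k=2\lambda$. This family is union-closed, with chain length $k$ and minimum weight $\lambda+1$. Its support has only $3\lambda=m$ atoms, yet it contains at least $\binom{2\lambda}{\lambda}=2^{\Omega(m)}$ sets of weight at most $2(\lambda+1)$. You cannot fall back on losing a few log factors here. Every atom of $R$ lies only in sets of weight at least $\lambda+1$. Yet any $(1\pm\eps)$ sparsifier must keep all but an $O(\eps)$ fraction of $R$: otherwise $\widetilde w(B)$ would have to approximate both $\lambda+1$ and $\lambda+|J|$ for a large unkept $J$. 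So sampling $R$ at any rate of order $\mathrm{polylog}(m)/(\eps^2\lambda)$ fails once $\lambda$ is large.

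\textbf{The missing idea.} Replace the minimum weight by the density $\Phi(C)=\min_{C'\subseteq C}|\Supp(C')|/\CL(C')$, and peel off dense subfamilies before sampling. With $\Phi$, your contraction argument goes through verbatim. Every family reached by contraction is a subfamily of $C$, so $|\Supp|\ge\Phi\cdot\CL$ holds at every step. Contracting any atom of the current support lowers the chain length by at least one deterministically, so no expectation argument is needed. You then get at most $\binom{\CL(C)}{\alpha}(m+1)^{\alpha}$ sets of weight at most $\alpha\Phi(C)$ (Claim~\ref{clm:countingBound}).

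To force $\Phi>d$, repeatedly delete $\Supp(C')$ for a subfamily with $|\Supp(C')|\le d\,\CL(C')$. Chains of $C'$ and of the remainder concatenate (Claim~\ref{clm:chainlengthdecrease}), so the peeled chain lengths sum to at most $\CL(C)$ and at most $d\,\CL(C)$ atoms are peeled (Lemma~\ref{lem:CLDecomposition}). In the example above, any $d\ge 3/2$ peels everything, so that family is never subsampled. The remainder is sampled uniformly at rate about $\eta/d$. The paper recurses on both parts with $d=\sqrt{m\eta/\CL(C)}$ for $\log\log m$ rounds, and this replaces your strength decomposition.

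\textbf{The weight reduction.} This step is also flawed, for two reasons.
\begin{enumerate}
\item Discarding atoms lighter than $\eps\max_i w(i)/m^2$ destroys every set made only of light atoms, e.g.\ $\mathcal S=\{\{1\},\{2\}\}$ with $w(1)=1$ and $w(2)=m^3$.
\item If the $O(\log m/\eps)$ weight classes are sparsified separately, the size is multiplied by the number of classes, since $\sum_t\CL(\mathcal S|_{I_t})$ can be that many times $\CL(\mathcal S)$.
\end{enumerate}
The paper instead groups atoms into scales $I_t$ of ratio $m^3$ and lets $C_t$ be the sets whose heaviest atom lies in $I_t$. It sparsifies each $(C_t\cup C_{t+1})|_{I_t}$, handling polynomially bounded ratios by duplicating atoms. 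By interleaving odd and even chains it shows $\sum_t\CL((C_t\cup C_{t+1})|_{I_t})\le 2\CL(C)$.
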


Of note, this bound is ``near-optimal'' in the sense that for any set system $\mathcal S \subseteq 2^{[m]}$, there is a set of weights $w$ such that any sparsifier needs at least $\CL(\mathcal S)$ atoms (see Lemma~8.9 of \cite{brakensiek2025redundancy}\footnote{Lemma~8.9 appears in the full version of their paper at \url{https://arxiv.org/abs/2411.03451}.}).  
Furthermore, by a data structure lower bound of Carlson, Kolla, Srivastava and Trevisan~\cite{carlson2019optimal} for graph cuts, we know that the dependence on $\eps > 0$ is optimal in the worst case. The main inefficiency in the bound of Theorem~\ref{thm:BG} is thus the additional factor of $\log^6(m)$.

The proof of Theorem~\ref{thm:BG} is rather complex, as the proof first seeks to find near-optimal sparsifiers in the unweighted setting by adapting techniques pioneered by Gilmer~\cite{gilmer2022constant} in his recent breakthrough on the union-closed set conjecture (see \cite{brakensiek2025redundancy} for a much more detailed history of this problem).

\subsection{Main Result}

Our main result is a much more direct proof of Theorem~\ref{thm:BG} using a ``contraction''-style argument similar to those used in many papers in the sparsification literature~\cite{DBLP:conf/soda/Karger93,BK96,FHH11,KK15, ghaffari2017random, KPS24}. As a consequence, we also get a much sharper asymptotic analysis.

\begin{theorem}[Main Result, see Theorem~\ref{thm:main}]\label{thm:main-intro}
For a set system $\mathcal S \subseteq 2^{[m]}$, weights $w: [m] \rightarrow \R_{\geq 0}$ and $\eps > 0$, there exists a $(1 \pm \eps)$ sparsifier of $\mathcal S$ which retains only 
     \[
     O \left ( \frac{ \CL(\mathcal S) \cdot \log^2(\CL(\mathcal S)/\eps) \cdot (\log\log(\CL(\mathcal S)/\eps))^2}{\eps^2} \right )
     \]
     many atoms.
\end{theorem}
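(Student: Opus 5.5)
The plan is to follow the Karger / Khanna--Putterman--Sudan template: first prove a counting bound for ``near-minimum'' sets via a contraction argument, then use it to get importance sampling probabilities, and finally reduce the support iteratively.

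\textbf{Step 1: the counting bound.} We want a chain-length analogue of Karger's cut-counting bound. For $\calS$ with union-closure $\bigcup\calS$ and chain length $\CL(\calS)=\ell$, let $\lambda$ be the minimum weight of a nonempty set in $\calS$. We aim to show that the number of $S\in\calS$ with $w(S)\le \alpha\lambda$ is at most $\ell^{O(\alpha)}$, or a variant such as $(e\ell)^{2\alpha}$.

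The contraction process works as follows. Repeatedly sample an atom $i$ with probability proportional to $w(i)$ among atoms in the current ``live'' union $U$. ``Contract'' $i$ by discarding all sets of $\calS$ containing $i$. The union of the surviving sets then strictly shrinks. The key observation is that $U$ contains a maximal union-closed element, and every contraction removes at least one atom. Hence the sequence of live unions $U_0\supsetneq U_1\supsetneq\cdots$ is a descending chain in the union-closure, so there are at most $\ell$ rounds.

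A fixed target set $S$ survives a round with probability at least $1-w(S)/w(U)$. We need $w(U)$ to be large relative to $\lambda$. The natural move is to stop contracting once survivors have small total weight and output a uniformly random surviving set. More cleanly, we follow the KPS linear-algebraic style and bound survival by $\prod_j (1-\alpha\lambda/w(U_j))$.

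\textbf{Main obstacle.} Unlike graphs, $w(U_j)$ need not be at least $(\text{rank})\cdot\lambda$. I would fix this by instead sampling from a set $T_j\in\calS$ chosen from the current family, e.g.\ a minimal-weight survivor. Alternatively, I would sample $k=\lceil\alpha\rceil$ atoms per round. The aim is a survival probability of roughly $\ell^{-O(\alpha)}$, which I expect to be the genuinely delicate calculation.

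\textbf{Step 2: sampling.} With the counting bound in hand, run Bencz\'ur--Karger style importance sampling. Peel off sets level by level: define the strength of an atom via the minimum weight of sets containing it in successively contracted families. Sample atom $i$ with probability $p_i\approx \min(1, C\log(\ell/\eps)/(\eps^2\lambda_i))$ and reweight by $1/p_i$. A union bound over the $\ell^{O(\alpha)}$ sets of weight about $\alpha\lambda$, combined with Chernoff, gives a $(1\pm\eps)$ guarantee. Summing $1/\lambda_i$ over atoms is bounded by $O(\ell)$ via the chain structure, which gives an expected size of $O(\ell\log(\ell/\eps)/\eps^2)$ times the number of weight scales.

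\textbf{Step 3: removing dependence on $m$.} To get bounds in terms of $\log(\ell/\eps)$ rather than $\log m$, iterate the sparsification with error $\eps_t$ summing to $\eps$. Each round halves the support until it is $\mathrm{poly}(\ell/\eps)$. At that point the union-bound log factors become $\log(\ell/\eps)$. The iteration and weight-scale bucketing contribute the extra $\log(\ell/\eps)(\log\log(\ell/\eps))^2$ factors in Theorem~\ref{thm:main-intro}.
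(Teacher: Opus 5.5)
Your observation that each contraction strictly lowers the chain length, so there are at most $\ell$ rounds, is correct and matches the paper's contraction claim. The gap is that the counting bound you aim for in Step~1 is false, so no choice of sampling rule can prove it. Take $\calS=\{[m]\setminus\{i\} : i\in[m]\}$ with unit weights. Its union-closure is $\calS\cup\{[m]\}$, so $\CL(\calS)=2$, yet all $m$ sets have the minimum weight $\lambda=m-1$. Thus the number of sets of weight at most $\alpha\lambda$ is not bounded by $\ell^{O(\alpha)}$, by $(e\ell)^{2\alpha}$, or by any function of $\ell$ and $\alpha$ alone, already at $\alpha=1$. The binary simplex code, with $\CL=n$ and $2^n-1$ codewords all of minimum weight, is another instance. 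The ``main obstacle'' you flag is therefore not a technicality: minimum weight is the wrong normalization. Consequently Step~2 has no union bound to rest on, and the claim $\sum_i 1/\lambda_i=O(\ell)$ is asserted without justification.

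The missing idea is to normalize by the density $\Phi(C)=\min_{C'\subseteq C}|\Supp(C')|/\CL(C')$ instead of $\lambda$.
\begin{itemize}
\item Every intermediate code of the contraction is a subcode, so $|\Supp(C_j)|\ge\Phi(C)\cdot\CL(C_j)$ holds for free. This is exactly the ``rank times min-cut'' inequality you were missing. It gives survival probability at least $\prod_{k=\alpha+1}^{\CL(C)}(1-\alpha/k)=\binom{\CL(C)}{\alpha}^{-1}$ for a codeword of weight at most $\alpha\Phi(C)$.
\item One stops when $\CL\le\alpha$ and pays a factor $(m+1)^{\alpha}$ for the survivors via Claim~\ref{clm:CLVCdim}. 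The example above shows some $m$-dependence of this kind is unavoidable.
\item Because $\Phi(C)$ may be small, you also need the peeling lemma: repeatedly delete $\Supp(C')$ for subcodes with $|\Supp(C')|\le d\,\CL(C')$. Each deletion lowers the chain length by at least $\CL(C')$ (Claim~\ref{clm:chainlengthdecrease}), so at most $d\,\CL(C)$ coordinates are removed. The remainder has $\Phi>d$ and can be sampled uniformly at rate about $\sqrt{\eta\,\CL(C)/m}$.
\item The paper recurses on both the peeled and the sampled parts for $\log\log m$ levels. That recursion is where $\log^2 m\,(\log\log m)^2$ comes from; weight bucketing costs only a constant factor.
\item The $(m+1)^{\alpha}$ factor makes $\log m$ intrinsic to a single pass, so the $m$-dependence is not removed by halving the support. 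Instead the paper applies the $m$-dependent bound with accuracy $\eps/(Q\log m)$, which shrinks the support from $m$ to $\mathrm{polylog}(m)$ in one round. The per-round errors then form a rapidly convergent sum over $O(\log^* m)$ rounds, followed by two final applications.
\end{itemize}
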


Theorem~\ref{thm:main-intro} improves on Theorem~\ref{thm:BG} in two ways. First, the multiplicative overhead of $\log^6 m$ is reduced to a much smaller $\log^{2+o(1)}(\CL(\mathcal S)/\eps)$. Second, the bound in Theorem~\ref{thm:main-intro} is independent\footnote{We observe that the bound in Theorem~\ref{thm:BG} could also be made independent of $m$ by applying a recursive argument similar to that of Section~\ref{subsec:dimension-free}. Even so, the asymptotics of Theorem~\ref{thm:main-intro} would still be superior.} of $m$!

As an application of the above theorem, one can consider a graph $G = (V, E)$, and the set system $\mathcal{S} \subseteq 2^E$ which contains all the \emph{cuts} in the graph $G$. I.e., for every set $T \subseteq V$, one can consider the set of edges $E_T \subseteq E$ which is \emph{cut} by the set $T$ and let $\mathcal{S} = \bigcup_{T \subseteq V} E_T$. Sparsifying the set system $\mathcal{S}$ is thus \emph{equivalent} to creating a \emph{cut-sparsifier} of the graph $G$ in the classical sense of \cite{BK96}. Because the chain length of the set of cuts is bounded by $|V|$, \cref{thm:main-intro} recovers (up to an extra $\widetilde{O}(\log(m))$ factor) the sparsifier size bound of \cite{BK96}. 

Note that \cref{thm:main-intro} is however \emph{not} efficiently implementable. This is in large part due to the fact that, given a set system $\mathcal{S} \subseteq 2^{[m]}$, there is no known efficient algorithm for computing $\mathrm{CL}(\calS)$.

\subsubsection{Improvement to Code Sparsification}

As an additional corollary to \cref{thm:main-intro}, we give the first bound 
for linear code sparsification (see~\cite{KPS24}) which is \emph{independent} of the underlying finite field. In more detail, given a finite field $\F_q$ on $q$ elements, we define a linear code to be an $n$-dimensional subspace $C \subseteq \F_q^m$. A $(1 \pm \eps)$ sparsifier of $C$ is a reweighting $w : [m] \to \R_{\ge 0}$ such that for every $c \in C$ we have that
\[
    \sum_{i=1}^m w(i) \one[c_i \neq 0] \in (1 \pm \eps) \sum_{i=1}^m \one[c_i \neq 0].
\]
A main result of Khanna, Putterman, and Sudan~\cite{KPS24} (see also \cite{khanna2025efficient}) is that $C$ has a $(1\pm \eps)$ sparsifier of size\footnote{Size here is meant to be to the number of non-zero weights that are assigned; equivalently, the number of coordinates in $[m]$ that are retained.}  $|\supp(w)| = n \log^{O(1)}(n) \log q / \eps^2$. If we capture linear code sparsification by letting our set system $\mathcal S$ be $\{\supp c : c \in C\}$, then we obtain a linear code sparsifier of size $\CL(\mathcal S) \log^{2+o(1)}(\CL(\mathcal S)/\eps) / \eps^2$. From existing results on chain length~\cite{bessiere2020Chain,brakensiek2025redundancy}, we know that $\CL(\mathcal S)$ is precisely the dimension $n$ of the code $C$. 
This gives the first field-size independent bound of $n \cdot \log^{2+o(1)}(n/\eps) / \eps^2$ for code sparsification! 

Lastly, recall that both \cite{brakensiek2025redundancy}, \cref{thm:BG} and \cite{KPS24} used \cref{thm:BG} and linear code sparsification respectively to design \emph{CSP sparsifiers}. Because of our better parameters in \cref{thm:main-intro}, we immediately obtain tighter bounds for these applications.

\subsection{Related Work}\label{subsec:related-work}

We now briefly discuss how our work connects to other parts of the sparsification and broader TCS literature.

\subsubsection{Code, CSP, and Cayley graph sparsification} As previously mentioned, Khanna, Putterman and Sudan~\cite{KPS24} pioneered the concept of \emph{code sparsification}. In their original paper, \cite{KPS24} presented multiple applications of code sparsification including \emph{constraint satisfaction problem (CSP) sparsification} and \emph{Cayley graph sparsification}.

CSP sparsification was introduced by Kogan and Krauthgamer~\cite{KK15} to generalize graph cut sparsification to broader families of discrete structures, including hypergraph cut sparsification (e.g., \cite{soma2019spectral, chen2020near, kapralov2021towards, kapralov2022spectral, Lee23, jambulapati2023chaining, khanna2024near}). A more systematic investigation of general CSP sparsification was started by Filtser--Krauthgamer~\cite{FK17} and Butti--{\v Z}ivn{\'y}~\cite{BZ20}. However, the case of linear equations over a finite field was not resolved until the method of code sparsification was developed~\cite{KPS24}. In a follow-up Khanna, Putterman and Sudan~\cite{khanna2025efficient} extended these methods to equations over Abelian groups among other CSPs. However, results concerning all CSPs were not established until Brakensiek and Guruswami~\cite{brakensiek2025redundancy} generalized linear code sparsification to \emph{non-linear} codes (i.e., arbitrary set systems).

In a separate direction, the work of Khanna, Putterman and Sudan~\cite{KPS24} introduced \emph{Cayley graph sparsification} as a special case of graph cut sparsification where the input graph is a Cayley graph (i.e., the graph is defined by the generators of a suitable group). Here, the goal is not to construct any sparsifier, but rather a sparsifier which is itself a Cayley graph. Khanna, Putterman and Sudan~\cite{KPS24} handled the case in which the Cayley graph corresponds to spanning vectors of a finite vector space. More recently, such methods were generalized to arbitrary groups \cite{hsieh2026sparsifying}, and even a more general theory of sparsifying sums of PSD matrices \cite{basu2026sparsifying}.

\subsubsection{Chain length and query complexity} The notion of chain length was introduced by Bessiere, Carbonnel, and Katsirelos~\cite{bessiere2020Chain} in the context of analyzing the query complexity of constraint satisfaction problems in a model introduced by Bessiere et al.~\cite{bessiere2013constraint}. In this query model, one is not given explicit access to the constraints; rather, one queries a partial assignment to a subset of the variables, and the response is `YES' or `NO' depending on whether the partial assignment is consistent with all constraints induced by those variables. To connect this problem with the concept of chain length, consider a set system $\mathcal S \subseteq 2^{[m]}$ where $[m]$ represents the clauses of the CSP and $S \in \mathcal S$ if and only if there is a valid assignment to the CSP which satisfies the clauses indexed by $S$ (and no others). Then, $\widetilde{O}(\CL(\mathcal S))$ is an upper bound on the query complexity of this problem.
However, it is an open question whether this chain length bound is tight. See \cite{brakensiek2025redundancy} for further discussion.

\subsubsection{Connections to Learning Theory} Recall that chain length of $\mathcal S \subseteq 2^{[m]}$ is the length of the longest ascending chain in $\bigcup \mathcal S$. Another natural property of a set family $\bigcup \mathcal F$ is its \emph{VC-dimension}. That is, the size of the largest set $A \subseteq [m]$ such that for all $B \subseteq A$, there is some $S \in \mathcal F$ with $S \cap A = B$. Bessiere, Carbonnel, and Katsirelos~\cite{bessiere2020Chain} define the \emph{non-redundancy} of $\mathcal S$, denoted by $\NRD(\mathcal S)$ to be precisely the VC-dimension of $\bigcup \mathcal S$. Brakensiek and Guruswami~\cite{brakensiek2025redundancy} identify the non-redundancy of $\mathcal S$ is closely related to the sparsifiability of $\mathcal S$, assuming $\mathcal S$ is unweighted.

More broadly, the VC-dimension of a set family  $\mathcal F \subseteq 2^{[m]}$ is well-known to characterize the sample complexity needed to obtain a small \emph{additive-error} approximation of the set-sizes in the family. 
That is, with constant probability a random subset $T \subset [m]$ of size $O(\text{VC-dim}(\mathcal F)/\eps^2)$~\cite{VC71,AnthonyBartlett1999} 
satisfies $|T \cap S| \in |S| \pm \eps m$ 
for every $S \in \mathcal F$. Conversely, no set of size smaller than $\Omega_\eps(\text{VC-dim}(\mathcal F))$ can satisfy such a guarantee. 

In this view, the chain length characterizes the sample complexity needed to obtain \emph{multiplicative-error} approximation to set-sizes, in the weighted setting. Note that, unlike the VC-theorem which works with unweighted samples, reweighting of the chosen samples is necessary to obtain a multiplicative approximation. We also remark that an unweighted multiplicative sampling theorem (relative to VC-dimension) was proved by Li, Long, and Srinivasan~\cite{LiLongSrinivasan2001}; however, their notion of multiplicative approximation significantly differs from ours.

\subsection*{Organization}

In Section~\ref{sec:prelim}, we state some known results (Chernoff bounds, etc.). In Section~\ref{sec:contraction}, we prove a key counting bound by relating the technique of contractions to chain length. In Section~\ref{sec:sparsifiers}, we use the results of Section~\ref{sec:contraction} to construct a series of sparsifiers, culminating in Theorem~\ref{thm:main-intro}. In Section~\ref{sec:conclusion}, we give some concluding thoughts and open questions.

\section{Preliminaries}\label{sec:prelim}

We now present some background material on sparsification and related concepts.

\subsection{Notation and Sparsification Definitions}

For the purposes of our analysis, we represent set systems $\mathcal S \subseteq 2^{[m]}$ as a set of vectors (also called a \emph{code}) $C \subseteq \zo^m$, where $S \in \mathcal S$ if and only if $c \in C$ where $c \in \{0,1\}^m$ is defined by
\[
    c_i = \begin{cases}
    1 & i \in S\\
    0 & i \not\in S.
    \end{cases}
\]
Optionally, the coordinates $[m]$ of the code may be given \emph{weights} $w: [m] \rightarrow \R_{\geq 0}$. Note that when no weights are provided for the code, we assume that the weights are all $1$ (equivalently, an unweighted code). Occasionally, for a code $C \subseteq \zo^m$ and a set $S \subseteq [m]$, we will use the notation $C|_S = \{ c|_S: c\in C\}$ to be the coordinate restriction of all codewords in $C$ to the set $S$.

Our goal, given a parameter $\eps \in (0,1)$ is to design a \emph{sparsifier}:

\begin{definition}
For a code $C \subseteq \zo^m$, weights $w: [m] \rightarrow \R_{\geq 0}$, and an accuracy parameter $\eps > 0$, a $(1 \pm \eps)$ \emph{sparsifier} of $C$ is a new set of weights $\widetilde{w}: [m] \rightarrow \R_{\geq 0}$ such that, for every $c \in C$:
\[
\langle \widetilde{w}, c \rangle = \sum_{i = 1}^m \widetilde{w}(i) c_i \in (1 \pm \eps ) \langle w, c \rangle = \sum_{i = 1}^m w(i) c_i.
\]
The goal is to design sparsifiers which \emph{reduce} the support size of the starting code; i.e., which minimize $|\supp(\widetilde{w})|$.
\end{definition}

\begin{remark}
    Note that in some contexts, for an unweighted code $C \subseteq \zo^m$, we will use $w \cdot C$ to refer to the code which assigns weight $w$ to \emph{all} coordinates in $[m]$. Similarly, for codes $C_1 \subseteq \zo^{A_1}$, $C_2 \subseteq \zo^{A_2}$ with $A_1 \cap A_2 = \emptyset$, we may use $w_1 \cdot C_1 \cup w_2 \cdot C_2$ to refer to the code which assigns weights $w_1$ to coordinates in $A_1$ and $w_2$ to coordinates in $A_2$.
\end{remark}

For reference, we include the definitions of non-redundancy and chain length below. Note that non-redundancy essentially captures the largest \emph{diagonal} matrix (after permuting the codewords) which one can find in a set of $\zo$-valued vectors.

\begin{definition}
    Let $C \subseteq \{0,1\}^m$ be an arbitrary set of vectors. We say that the \emph{non-redundancy} of $C$ (denoted $\mathrm{NRD}(C)$) is the largest size of a set $S \subseteq [m]$ such that for every $j \in S$, there is a codeword $c \in C$ where $c_j = 1$, but for every other $i \in S \setminus \{j\}$, $c_i = 0$.
\end{definition}

Likewise, we use the notion of chain length, which is essentially the largest upper triangular submatrix contained in a set of vectors:

\begin{definition}
    Let $C \subseteq \zo^m$ be a code. A chain of length $\ell$ is a pair of injective maps $a: [\ell] \rightarrow m$ and $c: [\ell] \rightarrow C$ such that the following conditions hold:
    \begin{enumerate}
        \item $\forall i \in [\ell]: c(i)_{a(i)} = 1$
        \item $\forall 1 \leq i < j \leq \ell: c(i)_{a(j)} = 0$.
    \end{enumerate}
    The chain length of $C$, denoted by $\mathrm{CL}(C)$ is the length of the longest chain.
\end{definition}

Note that $\mathrm{NRD}(C) \leq \CL(C)$ as any diagonal matrix is trivially an upper triangular matrix. With this, we can make use of the following bound (observed in \cite{brakensiek2025redundancy}):

\begin{claim}\label{clm:CLVCdim}
    $|C| \leq (m+1)^{\mathrm{NRD}(C)} \leq (m+1)^{\mathrm{CL}(C)}$.
\end{claim}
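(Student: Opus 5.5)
The plan is to prove the first inequality $|C| \le (m+1)^{\NRD(C)}$ by a Sauer--Shelah-type argument, and then get the second inequality from the already noted fact $\NRD(C) \le \CL(C)$ together with $m+1 \ge 1$. Write $d = \NRD(C)$. The key structural observation is that a set $S$ witnessing non-redundancy is exactly a set on which every singleton pattern $e_j$ (for $j \in S$) is realized by the restriction $C|_S$. In particular, we do not need full shattering, only the realization of all weight-one patterns on $S$.

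We proceed by induction on $m$, deleting one coordinate at a time. Fix the coordinate $m$ and let $C' = C|_{[m-1]}$. For each $c' \in C'$, the fiber $\{c \in C : c|_{[m-1]} = c'\}$ has size at most $2$. This would give a factor-$2$ recursion, which is not good enough, so we instead group the codewords differently.

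Let $C_0 = \{c \in C : c_m = 0\}$ and $C_1 = \{c \in C : c_m = 1\}$.
\begin{enumerate}
\item If no codeword has $c_m=1$, then $C = C_0$, so effectively $m$ drops and induction applies directly.
\item Otherwise, for $C_1$ restricted to $[m-1]$, every non-redundant set $T \subseteq [m-1]$ of $C_1|_{[m-1]}$ is still valid, but we would like $\NRD$ to drop. It need not, since adding $m$ to $T$ requires a codeword that is zero on $T$ and one at $m$.
\end{enumerate}

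A cleaner route, and the one I would actually use, is a direct encoding argument. Process codewords greedily. Each $c \in C$ is determined by the set $\supp(c)$, and I claim every $c$ is determined by a ``minimal witness'' $W(c) \subseteq \supp(c)$ of size at most $d$, given as an ordered list of elements of $[m]$ padded with a null symbol. This would yield at most $(m+1)^d$ possibilities.

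To define the witness, we want $W(c)$ to be such that $c$ is the unique codeword of $C$, under some fixed canonical rule, associated to $W(c)$. I would try the following canonical rule: order $C$ arbitrarily, and map $c$ to a minimal set $W \subseteq [m]$ such that $c$ is the only codeword agreeing with $c$ on $W$ in the sense of ``$c'_i = 1$ for all $i \in W$ and $c' \subseteq c$''. Minimality then forces each $i \in W$ to have some codeword $c'$ containing $W \setminus \{i\}$, contained in $c$, and not equal to $c$. The aim is to extract from such codewords a diagonal pattern on $W$, which would show $|W| \le d$.

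The main obstacle is exactly this extraction, because the codewords produced by minimality do not obviously have the required zeros on $W$. If that step resists, I would fall back on the standard shifting (down-compression) technique: the compression toward lower weight preserves $|C|$ and does not increase the number of singleton-realizing sets, and for a down-closed family the condition ``all singletons of $S$ realized'' is equivalent to $S \subseteq \supp(c)$ for some $c$. In the down-closed case every codeword has support size at most $d$, since any support $S$ is itself non-redundant. The count is then $\sum_{k \le d}\binom{m}{k} \le (m+1)^d$.

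Care is needed in verifying that compression does not increase $\NRD$. This is the delicate check, and I would argue it coordinate by coordinate, mirroring the standard proof that shifting does not create new shattered sets.
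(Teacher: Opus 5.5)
There is a genuine gap. Your primary route is left unfinished, and the witness map is not injective as defined. For $C=\{\{1,2\},\{1,3\}\}$, both codewords get $W=\emptyset$, since each is the only codeword contained in itself; so $W(c)$ cannot determine $c$.

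The fallback fails at exactly the step you flagged: down-compression can increase $\NRD$. Take $m=2$ and $C=\{\emptyset,\{1\},\{1,2\}\}$. Here $\NRD(C)=1$ because no codeword contains $2$ but not $1$, and $|C|=3=(m+1)^1$ is tight. Compressing coordinate $1$ replaces $\{1,2\}$ by $\{2\}$ and gives $\{\emptyset,\{1\},\{2\}\}$, whose non-redundancy is $2$. More generally, compressing the chain $\emptyset\subset\{1\}\subset\{1,2\}\subset\cdots\subset[m]$ (which has $\NRD=1$) coordinate by coordinate in the order $1,2,\dots,m$ ends at $\emptyset$ plus all singletons, which has $\NRD=m$. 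The standard monotonicity under shifting holds for shattered sets, not for your weaker singleton-realization condition. Your characterization of down-closed families is likewise the shattering one and is false here: in $\{\emptyset,\{1\},\{2\}\}$ the set $\{1,2\}$ is non-redundant but lies in no support.

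The missing idea is to turn singleton-realization into genuine shattering by passing to the union-closure $\bar C=\{\bigcup_{c\in A}\supp(c): A\subseteq C\}$, including $\emptyset$. A set $S$ is non-redundant for $C$ if and only if $\bar C$ shatters $S$, for two reasons. The union of the codewords realizing $e_j$ for $j\in P$ realizes any pattern $P\subseteq S$. Conversely, if a union meets $S$ exactly in $\{j\}$, then one of its members does. So $\NRD(C)$ equals the VC-dimension of $\bar C$; this is the identification the paper itself mentions in its related-work section, and the paper only cites Brakensiek--Guruswami for the claim. Writing $d=\NRD(C)$ and using $C\subseteq\bar C$, Sauer--Shelah gives $|C|\le|\bar C|\le\sum_{k\le d}\binom{m}{k}\le\sum_{k\le d}m^k\le(m+1)^d$. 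Your shifting argument becomes valid if you run it on $\bar C$ and track shattered sets. The second inequality, via $\NRD(C)\le\CL(C)$, is fine as you have it.
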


\subsection{Concentration Bound}

We will make use of the following concentration bound when sub-sampling to construct our sparsifiers:

\begin{claim}\label{clm:concentrationBound}{\rm (\cite{FHH11})}
    Let $X_1, \dots X_{\ell}$ be random variables such that $X_i$ takes on value $1 / p_i$ with probability $p_i$, and is $0$ otherwise. Also, suppose that $\min_i p_i \geq p$. Then, with probability at least $1 - 2e^{-0.38 \eps^2 \ell p}$,
    \[
    \sum_i X_i \in (1 \pm \eps) \ell.
    \]
\end{claim}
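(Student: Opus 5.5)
This is a standard multiplicative Chernoff estimate for a sum of independent reweighted indicators; the constant $0.38$ comes from the usual two-sided bounds once each summand is rescaled to lie in $[0,1]$. I assume, as the statement intends, that the $X_i$ are mutually independent. Each $X_i$ has mean $p_i \cdot (1/p_i) = 1$, so $\mu := \mathbb{E}[\sum_i X_i] = \ell$. The difficulty is that $X_i$ can be as large as $1/p_i \le 1/p$, so the summands are not $\{0,1\}$-valued. To fix this, I will set $Y_i := p X_i$. Then $Y_i \in [0, p/p_i] \subseteq [0,1]$, the $Y_i$ are independent, and $\mathbb{E}[\sum_i Y_i] = p\ell$. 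Since the deviation event $\sum_i X_i \notin (1\pm\eps)\ell$ is exactly the event $\sum_i Y_i \notin (1\pm\eps)p\ell$, it suffices to bound the latter.

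Next I will apply the Chernoff--Hoeffding bound for independent variables in $[0,1]$ with mean $\mu' = p\ell$. Its standard proof uses the moment generating function together with convexity: $\mathbb{E}[e^{tY_i}] \le 1 + \mathbb{E}[Y_i](e^t - 1)$ for $Y_i \in [0,1]$. This yields
\[
\Pr\Big[\textstyle\sum_i Y_i \ge (1+\eps)\mu'\Big] \le \left(\frac{e^{\eps}}{(1+\eps)^{1+\eps}}\right)^{\mu'}, \qquad \Pr\Big[\textstyle\sum_i Y_i \le (1-\eps)\mu'\Big] \le \left(\frac{e^{-\eps}}{(1-\eps)^{1-\eps}}\right)^{\mu'} \le e^{-\eps^2\mu'/2}.
\]

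The remaining work, and the only delicate step, is turning the upper-tail bound into exactly $e^{-0.38\eps^2 \mu'}$. For that I need
\[
(1+\eps)\ln(1+\eps) - \eps \ge 0.38\,\eps^2 \quad \text{for } \eps \in (0,1].
\]
I will prove it as follows.
\begin{itemize}
\item Let $g(\eps) = (1+\eps)\ln(1+\eps) - \eps - 0.38\eps^2$. Then $g(0) = g'(0) = 0$.
\item Its second derivative is $g''(\eps) = \frac{1}{1+\eps} - 0.76$, which is positive for $\eps < 1/0.76 - 1 \approx 0.316$ and negative beyond that.
\item So $g$ is convex and then concave on $[0,1]$. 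It therefore suffices to check $g \ge 0$ at the right endpoint: $g(1) = 2\ln 2 - 1 - 0.38 \approx 0.0063 > 0$.
\end{itemize}
This is where the constant $0.38$ is tight: it is essentially $2\ln 2 - 1 \approx 0.386$. The lower tail is easier, since $e^{-\eps^2\mu'/2} \le e^{-0.38\eps^2\mu'}$.

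A union bound over the two tails then gives failure probability at most $2e^{-0.38\eps^2 p\ell}$, as claimed. This argument treats $\eps \in (0,1)$, which is the range fixed in the preliminaries. For $\eps \ge 1$ the lower tail would be trivial, but that case is not needed.
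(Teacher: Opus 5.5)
Your proof is correct. The paper does not prove this claim; it imports it from \cite{FHH11}. Your derivation is the standard argument behind that citation: rescale to $Y_i = pX_i \in [0,1]$, apply the two multiplicative Chernoff tails with mean $p\ell$, and use $(1+\eps)\ln(1+\eps) - \eps \ge 0.38\,\eps^2$ on $(0,1]$. Your convex-then-concave check of that inequality is sound, and the constant is nearly tight because $2\ln 2 - 1 \approx 0.386$.

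As I recall it, \cite{FHH11} states a slightly more general form, with additive error $\eps N$ for any $N \ge \ell$. The paper only uses the case $N = \ell$, which is exactly what you prove.

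You were right to make two hypotheses explicit that the statement omits. Independence is needed for the claim to hold at all. The restriction $\eps \le 1$ is also needed, since the upper-tail bound $e^{-0.38\eps^2 p\ell}$ genuinely fails for large $\eps$.
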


\section{Chain Length Counting Bounds via Contractions}\label{sec:contraction}

In this section we show the following lemma via a simple contraction argument:

\begin{lemma}\label{lem:CLDecomposition}
    Let $C \subseteq \zo^m$ be any code. Then, for any parameter $d>0$, there is a set $T \subseteq [m]$ of size $|T| \leq \mathrm{CL}(C) \cdot d$ such that for any $\alpha \in \Z^+$, the number of codewords in $C|_{\bar{T}}$ of weight $\leq \alpha d$ is at most $\binom{\mathrm{CL}(C)}{\alpha} \cdot (m+1)^{\alpha}$.
\end{lemma}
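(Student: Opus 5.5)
We build $T$ greedily, in the spirit of the iterative decomposition used in \cite{KPS24}, and then count light codewords with a Karger-style contraction argument in which chain length replaces the rank or number of vertices. The construction goes as follows. Start with $T = \emptyset$. While $C|_{\bar T}$ contains a nonzero codeword $c$ of weight at most $d$, add $\supp(c|_{\bar T})$ to $T$. Each step adds at most $d$ coordinates. We claim there are at most $\mathrm{CL}(C)$ steps. If the steps pick codewords $c(1), \dots, c(k)$ and coordinates $a(i) \in \supp(c(i)|_{\bar T_{i-1}})$, then $c(i)_{a(i)} = 1$. Moreover, for $j > i$ we have $a(j) \notin T_i \supseteq \supp(c(i)|_{\bar T_{i-1}})$, and $a(j) \notin T_{i-1}$, so $c(i)_{a(j)} = 0$. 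This is exactly a chain (the maps are injective since the $a(i)$ are distinct and the $c(i)$ differ on $a(i)$), so $k \le \mathrm{CL}(C)$ and $|T| \le \mathrm{CL}(C)\, d$. At termination, every nonzero codeword of $C' := C|_{\bar T}$ has weight greater than $d$.

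For the counting step, fix $\alpha$ and a codeword $c \in C'$ with $\wt(c) \le \alpha d$. We run a contraction process on $C'$. At each step the current code $C''$ is the restriction to the not-yet-deleted coordinates. Pick a uniformly random coordinate among those on which some \emph{surviving} codeword is nonzero, and delete it. Restricting to the complement is the analogue of contracting an edge. To keep the analogue of a min-cut lower bound, we only ever sample when every nonzero codeword has weight greater than $d$. The cleanest way to arrange this is to interleave: after each random deletion, re-run the greedy step on the new code. That is, repeatedly delete supports of light codewords, and charge each such deletion to a new chain element. Chain length never increases under restriction.

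Each ``forced'' deletion of a light support increases a chain that we extend across the whole process, so the total number of forced deletions plus random deletions is at most $\mathrm{CL}(C)$. The key invariant is that the sequence of deletions again forms a chain in $C$, exactly as in the argument above. A fixed light codeword $c$ survives (stays nonzero and is never among the forced-deleted supports) until the code becomes trivial only if every random deletion avoids $\supp(c)$. However, we do \emph{not} want $c$ to survive entirely; we want to output $c$ cheaply. So we stop the process at the moment $c$ becomes one of the light codewords, and record the sequence of choices. Alternatively, we directly show that $c$ is determined by at most $\alpha$ ``marked'' positions among the $\le \mathrm{CL}(C)$ steps, together with one coordinate label in $[m+1]$ each. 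This gives the bound $\binom{\mathrm{CL}(C)}{\alpha}(m+1)^\alpha$ by an encoding rather than a probability argument.

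The encoding is the part I expect to be the main obstacle. The intended route is to turn Karger's probability bound into a deterministic count. Among the $\le \mathrm{CL}(C)$ chain steps, the steps where $c$ gets ``hit'' are few. Since nonzero codewords have weight $>d$ while $\wt(c)\le\alpha d$, a weight-halving/averaging argument should show that $c$ can lose weight in at most $\alpha$ steps before it is exhausted. Specifically, at each forced step where the deleted support meets $\supp(c)$ we lose part of $c$, and we need to bound how many such steps occur. Then $c$ is described by which $\le\alpha$ of the chain steps touch it and, at each, a coordinate in $[m]$ (or a null symbol), reconstructing $c$ via \cref{clm:CLVCdim}-style determination.

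Making this rigorous is the hardest step. My first attempt is the probabilistic version. Random deletion of a uniform coordinate from a code with all weights $>d$ hits $\supp(c)$ with probability roughly at most $\alpha d / (\text{current total support})$. Multiplying over the $\mathrm{CL}(C)$ steps should give a survival probability of at least $1/\binom{\mathrm{CL}(C)}{\alpha}$, up to the $(m+1)^\alpha$ loss. The final step is to conclude that, since distinct light codewords yield distinct final outcomes, their number is at most the inverse of that probability.
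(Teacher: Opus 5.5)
There is a genuine gap, and it starts in your construction of $T$. Peeling supports of light codewords only guarantees that every nonzero codeword of $C|_{\bar T}$ has weight $>d$. That is the condition $|\Supp(C'')| > d\cdot\CL(C'')$ only for subcodes with $\CL(C'')=1$. The counting bound needs it for \emph{all} subcodes $C''$, and minimum weight does not imply it. In graphs, min-cut $\ge d$ forces every contracted graph on $k$ vertices to have $\ge kd/2$ edges. A code, by contrast, can have large minimum weight and still have small support relative to its chain length.

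Concretely, take $C=\{x\in\zo^N : \wt(x)\ge N-3\}$ and $d=N/2$. In a chain, $c(1)$ must vanish on $a(2),\dots,a(\ell)$, so $\CL(C)=4$. Every nonzero codeword has weight $\ge N-3>d$, so your procedure returns $T=\emptyset$. Yet all $|C|\ge\binom{N}{3}$ codewords have weight $\le 2d$, and $\binom{N}{3}$ exceeds $\binom{4}{2}(N+1)^2$ already at $N=100$. So the lemma is false for your $T$, and no later counting argument can repair it.

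The paper instead peels supports of low-\emph{density} subcodes $C'$ with $|\Supp(C')|\le d\cdot\CL(C')$. It continues until $\Phi(C|_{\bar T})=\min_{C''}|\Supp(C'')|/\CL(C'')>d$. It bounds $|T|\le d\cdot\CL(C)$ using \cref{clm:chainlengthdecrease}: removing $\Supp(C')$ lowers the chain length by $\CL(C')$. Your chain argument for the greedy step is exactly the $\CL(C')=1$ special case of this.

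Your contraction step also uses the wrong operation. Deleting a coordinate need not lower the chain length (duplicate a coordinate and delete one copy). So the count ``forced plus random deletions $\le\CL(C)$'' is unjustified, and deleting a coordinate of $\supp(c)$ does not eliminate $c$. The right analogue of contracting an edge is passing to the subcode $\{c' : c'_i=0\}$, which drops $\CL$ by at least one.

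With $\Phi>d$ in hand, every intermediate subcode of chain length $k$ has support $\ge dk$. A fixed $c$ with $\wt(c)\le\alpha d$ is then hit with probability $\le\alpha/k$. Multiplying over $k=\alpha+1,\dots,\CL(C)$ gives survival probability at least $\binom{\CL(C)}{\alpha}^{-1}$. The final uniform choice among at most $(m+1)^\alpha$ survivors (\cref{clm:CLVCdim}) supplies the remaining factor.

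Your closing heuristic, ``hit probability $\le\alpha d/(\text{current support})$'', gives $\alpha/k$ only under exactly this support lower bound, which minimum weight cannot provide. The encoding alternative you sketch is not specified enough to evaluate.
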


\subsection{Contractions}

To start, we have the following claim which governs how the chain length of a set of vectors behaves under \emph{contractions}.

\begin{claim}
Let $C \subseteq \{0,1\}^m$ be an arbitrary set of vectors and let $i \in [m]$ be any coordinate such that there exists a $c \in C$ such that $c_i = 1$. Then, for $C' = \{c \in C: c_i = 0 \}$, we have that $\mathrm{CL}(C') \leq \mathrm{CL}(C) -1$.
\end{claim}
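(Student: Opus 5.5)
The plan is to take a longest chain of $C'$ and extend it by one element, so that it becomes a chain of $C$ of length $\mathrm{CL}(C')+1$. This gives $\mathrm{CL}(C) \geq \mathrm{CL}(C')+1$, which is the claim.

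Let $\ell = \mathrm{CL}(C')$, and fix a chain $(a, c)$ of length $\ell$ in $C'$. Here $a: [\ell] \to [m]$ and $c: [\ell] \to C'$ are injective, with $c(j)_{a(j)} = 1$ for all $j$ and $c(j)_{a(k)} = 0$ for all $j < k$. By hypothesis there is some $c^* \in C$ with $c^*_i = 1$. Define a new chain of length $\ell+1$ by prepending the pair $(i, c^*)$. Concretely, set $a'(1) = i$, $c'(1) = c^*$, and $a'(j+1) = a(j)$, $c'(j+1) = c(j)$ for $j \in [\ell]$.

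Injectivity holds for two reasons. First, $c^* \notin C'$ because $c^*_i = 1$, so $c^*$ differs from every $c(j)$. Second, $i$ differs from every $a(j)$, because $c(j)_{a(j)} = 1$ while $c(j)_i = 0$, the latter since $c(j) \in C'$.

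For the chain conditions, the diagonal condition holds for the new first element since $c^*_i = 1$, and for the later elements it is inherited from the old chain. For the upper-triangular condition, the pairs among the old elements are also inherited from the old chain. The only new pairs are those in which the new element comes first in the order. There the required vanishing of entries is exactly a statement about coordinate $i$ on codewords of $C'$, which holds by the definition of $C'$.

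The one point needing care is choosing the right end of the chain at which to insert $(i, c^*)$. We need every old codeword to be zero on the new coordinate, and this is guaranteed for $i$. We do not need $c^*$ to be zero on the old coordinates, which we could not guarantee. So in the orientation of the definition, where $c(j)_{a(k)} = 0$ for $j < k$, I would double-check that placing the new pair in the position that makes the required zeros $c(j)_i = 0$ is the correct choice. If the indexing works out the other way, I would place the new pair at the opposite end so that the same property is used.
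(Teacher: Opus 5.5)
Your overall idea matches the paper's: take a longest chain of $C'$ and extend it by the pair $(i, c^*)$. Your injectivity check is also correct, and it is more careful than the paper, which never verifies injectivity. The concrete construction you wrote down, however, fails under the paper's definition.

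The definition requires $c(j)_{a(k)} = 0$ for $j<k$. So the codeword at an earlier position must vanish on the coordinates at all later positions. If you prepend, setting $a'(1)=i$ and $c'(1)=c^*$, the new constraints are the pairs $(1,k)$ with $k\ge 2$. These read
\[
c'(1)_{a'(k)} = c^*_{a(k-1)} = 0,
\]
which says $c^*$ vanishes on all the old coordinates $a(1),\dots,a(\ell)$. That is exactly the property you yourself noted cannot be guaranteed. Your sentence claiming these pairs only require coordinate $i$ to vanish on codewords of $C'$ swaps the roles of rows and columns. The statements $c(j)_i=0$ correspond to the pairs $(k,1)$ with $k>1$, which the definition does not constrain at all.

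A concrete failure: take $C=\{(1,1),(0,1)\}$ and $i=1$. Then $C'=\{(0,1)\}$, with chain $a(1)=2$, $c(1)=(0,1)$, and $c^*=(1,1)$. Prepending gives $c'(1)_{a'(2)} = (1,1)_2 = 1 \neq 0$, so the result is not a chain.

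The fix is the alternative you flagged in your last paragraph, and the indexing does force it. Append the new pair at the end: keep the old chain in positions $1,\dots,\ell$ and set $a'(\ell+1)=i$, $c'(\ell+1)=c^*$. Now the only new constraints are $c(j)_{a'(\ell+1)} = c(j)_i = 0$ for $j\le \ell$. These hold because each $c(j)\in C'$, and nothing is required of $c^*$ beyond $c^*_i=1$. This is precisely the paper's proof.

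As written, your proposal commits to the wrong end and then justifies it with an incorrect verification. It becomes a complete proof once you switch to appending; your injectivity argument carries over unchanged.
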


\begin{proof}
For the code $C'$, let the maps witnessing the chain length of $C'$ be denoted by $a', c'$, and let the chain length be $\ell'$. Now, let us define $a: [\ell' + 1] \rightarrow [m], c: [\ell'+1] \rightarrow C$, such that for $j \in [\ell'], c(j) = c'(j), a(j) = a'(j)$, and $a(\ell'+1) = i$ and $c(\ell'+1) = v$, where $v \in C$ is any codeword such that $v_i \neq 0$.

Observe that we trivially have $\forall j \in [\ell'+1]: c(j)_{a(j)} = 1$. Likewise, because every codeword $c$ in $C'$ satisfies $c_i = 0$ and the first $\ell'$ codewords mapped to by $c$ are all in $C'$, we also have that for any $j \in [\ell']$, $c(j)_{a(\ell'+1)} = c(j)_{i} = 0$. Together with the conditions already guaranteed by $c', a'$, this yields a chain of length $\ell'+1$ in $C$, and thus the claim. 
\end{proof}

\subsection{Contraction Algorithm}

Leveraging the above claim which governs chain length under contractions, we define the following iterative contraction procedure:

\begin{algorithm}[H]
    \caption{Contract$(C, \alpha)$}
    \While{$\mathrm{CL}(C) \geq \alpha$}{
    Choose $i$ uniformly from $\{ i \in [m]: \exists c \in C: c_i \neq 0\}$. \\
    Let $C = C - \{ c \in C: c_i = 1\}$.
    }
    Let $c$ be a random codeword in $C$. \\
    \Return{$c$.}
\end{algorithm}

Now, let us define a new quantity:

\begin{definition}
    The density of a code $C \subseteq \zo^m$ is given by 
    \[
    \Phi(C) = \min_{C' \subseteq C} \frac{|\mathrm{Supp}(C')|}{\mathrm{CL}(C')},
    \]
    where $\Supp(C') = \{i \in [m]: \exists c \in C': c_i \neq 0 \}$.
\end{definition}

We immediately get the following counting bound. 

\begin{claim}\label{clm:countingBound}
    Let $C \subseteq \zo^m$ be a code. Then, for any positive integer $\alpha$, the number of codewords of weight $\leq \alpha \cdot \Phi(C)$ is $\leq \binom{\mathrm{CL}(C)}{\alpha} \cdot (m+1)^{\alpha}$.
\end{claim}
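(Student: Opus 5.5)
We use a Karger-style analysis of Contract$(C,\alpha)$: show that every codeword $c$ of weight at most $\alpha\Phi(C)$ is output with probability at least $\bigl(\binom{\mathrm{CL}(C)}{\alpha}(m+1)^{\alpha}\bigr)^{-1}$. Since the output events for distinct codewords are disjoint, the number of such codewords is then at most the inverse of this probability. Write $\ell = \mathrm{CL}(C)$, fix $c$ with $\wt(c)\le \alpha\Phi(C)$, and let $C_0 = C, C_1, C_2,\dots$ be the codes during the run. Say $c$ \emph{survives} a step if the chosen coordinate $i$ satisfies $c_i=0$; then $c$ remains in the current code.

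\textbf{Survival of one step.} Suppose $c\in C_t$ and $\ell_t:=\mathrm{CL}(C_t)\ge\alpha$. Since $C_t\subseteq C$, the definition of $\Phi$ gives $|\Supp(C_t)|\ge \Phi(C)\,\ell_t$. The coordinate $i$ is uniform on $\Supp(C_t)$, so the failure probability is at most
\[
\frac{\wt(c)}{|\Supp(C_t)|}\le \frac{\alpha}{\ell_t}.
\]
By the contraction Claim, each step lowers the chain length by at least $1$, so $\ell_{t+1}\le \ell_t-1$.

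\textbf{Survival of the whole run.} The loop runs while $\ell_t\ge\alpha$, and the chain lengths are strictly decreasing integers starting at $\ell$. Hence the survival probability is at least
\[
\prod_t \Bigl(1-\frac{\alpha}{\ell_t}\Bigr)\ge \prod_{j=\alpha+1}^{\ell}\Bigl(1-\frac{\alpha}{j}\Bigr).
\]
Replacing the realized $\ell_t$ by all integers in $(\alpha,\ell]$ only adds factors in $[0,1]$, and every factor is nonnegative because $\ell_t\ge\alpha$. A step with $\ell_t=\alpha$ contributes the factor $0$, which would kill the bound.

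\textbf{Main obstacle: the boundary at $\ell_t=\alpha$.} I would handle it by shifting the stopping threshold: run the loop while $\mathrm{CL}>\alpha$, or equivalently replace $\alpha$ by $\alpha+1$ in the loop condition. The final code then has chain length at most $\alpha$. The product telescopes:
\[
\prod_{j=\alpha+1}^{\ell}\frac{j-\alpha}{j}=\frac{(\ell-\alpha)!\,\alpha!}{\ell!}=\binom{\ell}{\alpha}^{-1}.
\]
The algorithm as written stops only when $\mathrm{CL}<\alpha$; the analysis works with either convention as long as the final chain length is at most $\alpha$.

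\textbf{Final output step.} If $c$ survives, it lies in the final code $C_{\mathrm{fin}}$. Since $C_{\mathrm{fin}}\subseteq\{0,1\}^m$ has $\mathrm{CL}(C_{\mathrm{fin}})\le\alpha$, \cref{clm:CLVCdim} gives $|C_{\mathrm{fin}}|\le(m+1)^{\alpha}$. So the uniform choice picks $c$ with probability at least $(m+1)^{-\alpha}$.

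\textbf{Conclusion.} Conditioning on the whole contraction history and multiplying,
\[
\Pr[\text{output}=c]\ge \binom{\ell}{\alpha}^{-1}(m+1)^{-\alpha}.
\]
Summing over the disjoint events gives the claimed bound. The degenerate cases $\alpha\ge\ell$ or $\alpha\Phi(C)=0$ follow directly from \cref{clm:CLVCdim}.
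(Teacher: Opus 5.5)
Your proposal is correct and follows the paper's proof essentially step for step: $|\Supp(C_t)|\ge\Phi(C)\,\CL(C_t)$ gives per-step survival probability at least $1-\alpha/k$, the telescoping product gives $\binom{\CL(C)}{\alpha}^{-1}$, and \cref{clm:CLVCdim} supplies the final $(m+1)^{-\alpha}$ factor. Your threshold fix (contracting only while $\CL>\alpha$) is exactly what the paper's analysis silently uses, since its product also starts at $k=\alpha+1$ even though the algorithm is written with $\CL(C)\ge\alpha$; similarly, for $\alpha>\CL(C)$ the bound only holds with $\binom{\CL(C)}{\alpha}$ read as $1$, a convention both you and the paper leave implicit.
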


\begin{proof}
    Fix any codeword $c$ of weight $\leq \alpha \Phi(C)$, and run $\mathrm{Contract}(C, \alpha)$. For some intermediate code $C'$ achieved during the contraction procedure, observe that if $\mathrm{CL}(C') = k$, then necessarily, $|\Supp(C')| \geq \Phi(C) \cdot k$. Thus, in the next random choice of $i$, the probability that our codeword $c$ satisfies $c_i = 1$ is at most \[
    \frac{\wt(c)}{\Phi(C) \cdot k} \leq \frac{\alpha \Phi(C)}{\Phi(C) \cdot k} \leq \frac{\alpha}{k},
    \]
    and thus $c$ survives the contraction with probability $\geq 1 - \frac{\alpha}{k}$. The probability that $c$ survives all contractions until $\mathrm{CL}(C) \leq \alpha$ is thus
    \[
    \geq \prod_{k = \alpha+1}^{\mathrm{CL}(C)} \left ( 1 - \frac{\alpha}{k} \right ) = \binom{\mathrm{CL}(C)}{\alpha}^{-1}.
    \]

    Conditioned on surviving, the codeword $c$ is then chosen uniformly at random among all surviving codewords, and thus returned by $\mathrm{Contract}(C, \alpha)$ with probability $\geq(m+1)^{-\alpha}$, where we are using \cref{clm:CLVCdim} which bounds the number of codewords in a code with $\CL$ at most $\alpha$. So, $c$ is returned by $\mathrm{Contract}(C, \alpha)$ with probability at least $(m+1)^{-\alpha} \cdot \binom{\mathrm{CL(C)}}{\alpha}^{-1}$. By taking the reciprocal, this yields the bound on the number of codewords of weight $\leq \alpha \cdot \Phi(C)$.
\end{proof}

Importantly, we also have the following claim:

\begin{claim}\label{clm:chainlengthdecrease}
    Let $C \subseteq \zo^m$ be a code, and let $C' \subseteq C$ such that $\mathrm{CL}(C') = \ell$. Let $T = \Supp(C')$, and let $C|_{\bar{T}}$ denote the code $C$ with the coordinates corresponding to the support of $C'$ removed. We have:
    \[
    \mathrm{CL}(C|_{\bar{T}}) \leq \mathrm{CL}(C) - \ell.
    \]
\end{claim}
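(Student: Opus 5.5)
The plan is to take a longest chain in $C|_{\bar T}$ and a longest chain in $C'$, and concatenate them into a chain in $C$ of length $\mathrm{CL}(C|_{\bar T}) + \ell$. This is the same idea as the single-coordinate contraction claim above, with the coordinate $i$ replaced by the whole set $T$.

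\textbf{Step 1: the two chains.} Let $k = \mathrm{CL}(C|_{\bar T})$, witnessed by maps $a'' : [k] \to \bar T$ and $c'' : [k] \to C|_{\bar T}$. For each $j$, pick some $\hat c(j) \in C$ whose restriction to $\bar T$ is $c''(j)$. Let $(a', c')$ be a chain of length $\ell$ in $C'$, so each $a'(j)$ lies in $\Supp(C') = T$ and each $c'(j) \in C' \subseteq C$.

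\textbf{Step 2: the order of concatenation.} The condition $c(i)_{a(j)} = 0$ for $i < j$ says that earlier codewords vanish on later coordinates. Codewords of $C'$ vanish on $\bar T$, since $T$ is their entire support. So the $C'$ part goes first and the lifted $C|_{\bar T}$ part goes after it. Define
\begin{itemize}
\item $a(j) = a'(j)$ and $c(j) = c'(j)$ for $j \le \ell$;
\item $a(\ell + j) = a''(j)$ and $c(\ell + j) = \hat c(j)$ for $j \le k$.
\end{itemize}

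\textbf{Step 3: checking the chain conditions.}
\begin{itemize}
\item \emph{Diagonal entries.} These equal $1$: for the second block, $a''(j) \in \bar T$ and $\hat c(j)$ agrees with $c''(j)$ on $\bar T$.
\item \emph{Within each block.} The zero condition for $i < j$ holds within the first block by the chain property of $(a', c')$. Within the second block it holds because the relevant coordinates lie in $\bar T$, where $\hat c$ agrees with $c''$.
\item \emph{Across blocks.} For $i \le \ell < j$, $a(j) \in \bar T$ and $c'(i)$ vanishes off $T$, so $c(i)_{a(j)} = 0$.
\end{itemize}

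\textbf{Step 4: injectivity.} The map $a$ is injective because the two blocks use disjoint coordinate sets $T$ and $\bar T$. The map $c$ is injective because a repeated codeword would violate the chain conditions at its two positions. If $c(i) = c(j)$ with $i < j$, then $c(i)_{a(j)} = 0$ but $c(j)_{a(j)} = 1$, a contradiction. In particular the lifts $\hat c(j)$ are automatically distinct, so no care is needed when choosing them.

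\textbf{Conclusion and main obstacle.} The concatenation is a chain of length $\ell + k$ in $C$, so $\mathrm{CL}(C) \ge \ell + \mathrm{CL}(C|_{\bar T})$, which is the claim. The only delicate point is the ordering in Step 2. The chain condition is asymmetric, and putting the $C'$ block first is what lets the condition "$C'$ vanishes off $T$" do the work.
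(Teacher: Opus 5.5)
Your proof is correct and is essentially the paper's argument: concatenate a longest chain of $C'$ (placed first) with a longest chain of $C|_{\bar T}$, so the cross-block zeros come from codewords of $C'$ vanishing off $T=\Supp(C')$, which gives the block upper-triangular picture the paper draws. Your explicit lifting of the $C|_{\bar T}$ codewords back to $C$, and your check that the concatenated maps are injective, fill in details the paper leaves implicit.
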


\begin{proof}
    As before, let $a_1, c_1$ be the functions which witness the chain length of $C|_{\bar{T}}$, and let $a_2, c_2$ be the functions which witness the chain length of $\mathrm{CL}(C')$. Note that $\mathrm{Im}(a_1) \subseteq [m] - T \subseteq [m]$ and $\mathrm{Im}(a_2) \subseteq T \subseteq [m]$.

    We define functions $a, c$ as follows: $a: [\ell + \mathrm{CL}(C|_{\bar{T}})] \rightarrow [m]$, where $a[j], c[j] = a_2[j], c_2[j]$ if $j \leq \ell$, and otherwise $a[j], c[j] = a_1[j - \ell], c_1[j - \ell]$. 
Importantly, because for $j \in [\ell]$, $c(j) \in C'$, and $a(\geq \ell) \subseteq \bar{T}$, we have that $c(j)_{a(\geq \ell)} = 0$. The remaining conditions for chain length are then trivially satisfied by the fact that $a_1, c_1, a_2, c_2$ already satisfied the definition of chain length individually. 

    Pictorially, we have:
    \[
    C = \begin{bmatrix} C' & A \\
        0 & C|_{\bar{T}}
    \end{bmatrix},
    \]
    thus we can trivially compose the upper triangular matrices corresponding to $C', C|_{\bar{T}}$. 
\end{proof}

\begin{lemma}
    Let $C \subseteq \zo^m$ be any code. Then, for any parameter $d>0$, there is a set $T \subseteq [m]$ of size $|T| \leq \mathrm{CL}(C) \cdot d$ such that for any $\alpha \in \Z^+$, the number of codewords in $C|_{\bar{T}}$ of weight $\leq \alpha d$ is at most $\binom{\mathrm{CL}(C)}{\alpha} \cdot (m+1)^{\alpha}$.
\end{lemma}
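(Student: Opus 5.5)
We will build $T$ by iteratively peeling off dense subcodes, as in a greedy ``dense subgraph removal'' argument. Start with $C_0 = C$ and $T_0 = \emptyset$. At step $t$, if $\Phi(C_t) \geq d$ we stop. Otherwise, by the definition of $\Phi$ there is a nonempty $C' \subseteq C_t$ with $|\Supp(C')| < d \cdot \CL(C')$. Since the support is nonempty whenever $\CL(C') = 0$ fails, we may assume $\CL(C') \geq 1$. Codewords that are identically zero contribute nothing and can be discarded, so we take $C'$ to contain a nonzero word. We then set $T_{t+1} = T_t \cup \Supp(C')$, where $\Supp(C')$ is viewed inside the coordinates of $C_t$, i.e.\ in $[m]\setminus T_t$. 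Finally we set $C_{t+1} = C_t|_{\overline{\Supp(C')}}$, which equals $C|_{\overline{T_{t+1}}}$.

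For the size bound, let $\ell_t = \CL(C')$ at step $t$. By \cref{clm:chainlengthdecrease}, $\CL(C_{t+1}) \leq \CL(C_t) - \ell_t$, so the $\ell_t \geq 1$ telescope: $\sum_t \ell_t \leq \CL(C)$. In particular the process terminates after at most $\CL(C)$ steps. Summing the support bounds gives
\[
|T| \leq \sum_t |\Supp(C'_t)| \leq d \sum_t \ell_t \leq d \cdot \CL(C).
\]

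At termination, the final code $C^* = C|_{\bar T}$ satisfies $\Phi(C^*) \geq d$. One degenerate case needs care: if $\CL(C^*) = 0$, then $C^*$ consists only of the zero word, so it has at most one codeword and the bound holds trivially. Otherwise we apply \cref{clm:countingBound} to $C^*$, viewed as a code on $m' = m - |T| \leq m$ coordinates. Any codeword of weight $\leq \alpha d$ has weight $\leq \alpha \Phi(C^*)$, so the number of such codewords is at most
\[
\binom{\CL(C^*)}{\alpha}(m'+1)^{\alpha} \leq \binom{\CL(C)}{\alpha}(m+1)^{\alpha},
\]
using $\CL(C^*) \leq \CL(C)$ and monotonicity of the binomial coefficient in its top argument. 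We also need to handle $\alpha \geq \CL(C^*)$. There, Contract returns immediately and the count is bounded by \cref{clm:CLVCdim} as $(m+1)^{\CL(C^*)} \leq (m+1)^{\alpha}$. The binomial $\binom{\CL(C)}{\alpha}$ may then be $0$ if $\alpha > \CL(C)$, so in that regime we read it as $\max(1,\cdot)$, matching how \cref{clm:countingBound} is meant.

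The main subtlety is bookkeeping for the density condition, specifically the ratio in $\Phi$ when $\CL(C')=0$. A subcode consisting only of the zero vector has empty support and chain length $0$, which would make the ratio undefined or zero. I would restrict the minimum in $\Phi$ to subcodes containing a nonzero word, which is the intended reading. The rest of the argument is a direct combination of \cref{clm:chainlengthdecrease} and \cref{clm:countingBound}.
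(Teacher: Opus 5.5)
Your proposal is correct and follows essentially the same route as the paper. You repeatedly peel off $\Supp(C')$ for a subcode with $|\Supp(C')| \le d\cdot\CL(C')$, bound $|T|$ by telescoping the chain-length drop from the block-triangular claim, and apply the contraction counting bound to the remaining code once $\Phi \ge d$. Your extra care with degenerate cases patches small gaps the paper leaves implicit without changing the argument: you exclude the all-zero subcode from the minimum defining $\Phi$, and you read $\binom{\CL(C)}{\alpha}$ as at least $1$ when $\alpha > \CL(C)$, where the literal bound would be $0$.
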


\begin{proof}
    If $\Phi(C) > d$, then we are immediately done by invoking \cref{clm:countingBound}. Otherwise, there is some $C' \subseteq C$ for which 
    \begin{align}\label{eq:density}
    \frac{|\mathrm{Supp}(C')|}{\mathrm{CL}(C')} \leq d.
    \end{align}
    So, let $T = \mathrm{Supp}(C')$, and let us set $C = C|_{\bar{T}}$. Again, we can check whether $\Phi(C) > d$, if so, we are done, and we simply return the set $T$ as is. Otherwise, there is a new subcode $C'' \subseteq C$ for which $\frac{|\mathrm{Supp}(C'')|}{\mathrm{CL}(C'')} \leq d$. Again, we let $T \leftarrow T \cup \mathrm{Supp}(C'')$, let $C = C|_{\bar{T}}$ and continue on recursively. 

    Eventually, $\Phi(C) > d$ (or the entire support of the code is removed) at which point we terminate. All that remains is to bound the size of the set $T$. For this, every time we remove the support of a subcode $C'$, we increase the size of the set $T$ by at most $d \cdot \mathrm{CL}(C')$, as $|\Supp(C')| \leq d \cdot \mathrm{CL}(C')$ (by \cref{eq:density}). However, by \cref{clm:chainlengthdecrease}, every time we remove $\Supp(C')$ from $C$, $\mathrm{CL}(C)$ decreases by $\mathrm{CL}(C')$. Hence, the total number of coordinates removed in the set $T$ can be at most $d \cdot \mathrm{CL}(C)$, before the chain length of the remaining code goes to $0$. This yields the lemma.
\end{proof}

\section{Leveraging Decomposition to Build Sparsifiers}\label{sec:sparsifiers}

\subsection{Basic Sparsifier Construction for Unweighted Codes}

A consequence of \cref{lem:CLDecomposition} is that it allows us to build sparsifiers for unweighted codes which preserve only $\widetilde{O}(\mathrm{CL}(C))$ many coordinates:

\begin{theorem}
    Let $C \subseteq \zo^m$ be an unweighted code. Then, for any $\eps > 0$, there exists a $(1 \pm \eps)$ sparsifier of $C$ which preserves only 
    \[
    O(\CL(C) (\log m)^2 (\log\log m )^2  / \eps^2)
    \]
    many re-weighted coordinates.
\end{theorem}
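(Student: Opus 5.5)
We plan to follow the standard Karger/Benczúr--Karger style ``peel off dense parts, sample the rest'' recursion, using \cref{lem:CLDecomposition} as the counting engine.

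Fix $d = \Theta(\log m \cdot \log\log m/\eps^2)$, with the constant to be chosen. Apply \cref{lem:CLDecomposition} to $C$ to obtain $T$ with $|T| \le \CL(C)\, d$, and keep all coordinates of $T$ with weight $1$. On the complement $\bar T$, sample each coordinate independently with probability $1/2$ and give survivors weight $2$. Then recurse on the surviving coordinates, which form an unweighted code scaled by $2$, with chain length at most $\CL(C)$ since restriction never increases chain length. After $O(\log m)$ rounds the remaining support is empty or has size $O(\CL(C)\, d)$. The total kept is then $O(\log m)\cdot \CL(C)\cdot d = O(\CL(C)\log^2 m\,\log\log m/\eps^2)$; the second $\log\log$ factor will come out of the error budgeting described below.

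\textbf{One round.} Let $\eps'$ be the per-round accuracy. A codeword $c|_{\bar T}$ with weight in $(\alpha d/2, \alpha d]$ is preserved to within $1\pm \eps'$ by sampling at rate $1/2$ with failure probability $2e^{-\Omega(\eps'^2 \alpha d)}$, by \cref{clm:concentrationBound}. By \cref{lem:CLDecomposition}, there are at most $\binom{\CL(C)}{\alpha}(m+1)^\alpha \le (m+1)^{2\alpha}$ such codewords. Taking $\eps'^2 d \gtrsim \log m$, a union bound over $\alpha = 1, 2, \dots$ gives failure probability $\sum_\alpha (m+1)^{2\alpha} e^{-c\eps'^2\alpha d} \ll 1$. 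Codewords of weight below $d$ need separate care. Their weight on $\bar T$ is small, but it could still be all of the codeword's weight. Here the $\alpha = 1$ term helps: there are only $(m+1)^{O(1)}$ such codewords, and we require additive error $\eps' d$, which is still multiplicatively fine once combined with the exactly kept part on $T$. I expect this to need a slight reformulation: demand additive error $\eps' \max(\wt, d)$ on the sampled part, and argue that low-weight codewords are entirely inside $T$ in the next round's accounting. This is the first thing to check carefully.

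\textbf{Error accumulation, the main obstacle.} Over $L = O(\log m)$ rounds the multiplicative errors compound, giving $(1\pm\eps')^{L}$. Naively setting $\eps' = \eps/L$ costs $\log^3 m$. To get $\log^2 m (\log\log m)^2$ instead, I would exploit that the errors are independent, mean zero, and geometrically scaled. Round $j$'s sampled weights are $2^j$ times a code whose relevant codewords have weight at least about $d$. I would run a single concentration argument over the whole telescoping sum, noting that the variance contributions form a sum of terms each bounded by the $d$ threshold. Alternatively, use a schedule $\eps_j$ with $\sum_j \eps_j \le \eps$ weighted by the contribution of round $j$; choosing $\eps_j \approx \eps/(j\log^2 j)$-type sequences costs only polylog-$\log$ factors, which is where $(\log\log m)^2$ should arise. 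Concretely, I would allot $d_j = \Theta(\log m/\eps_j^2)$ and sum $\CL(C)\, d_j$ over $j\le L$. Ensuring this sum is $O(\CL(C)\log^2 m(\log\log m)^2/\eps^2)$ while $\prod_j(1\pm\eps_j)\subseteq 1\pm\eps$ is the delicate bookkeeping I expect to be hardest. Finally, take a union bound over the $O(\log m)$ rounds and conclude by the probabilistic method.
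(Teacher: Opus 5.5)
There is a genuine gap, and it is exactly where you flag it: composing the errors of $L=\Theta(\log m)$ halving rounds. Your one-round analysis is fine. But each round's guarantee comes from a union bound over the codewords of the current code $C_j|_{\bar T_j}$ (up to $(m+1)^{O(\alpha)}$ of weight about $\alpha d_j$). So it is only a worst-case factor $1\pm\eps_j$, and it requires $\eps_j^2 d_j\gtrsim\log m$.

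Composing worst-case factors forces $\sum_j\eps_j\lesssim\eps$. Hence some round has $\eps_j\le\eps/L$, so $d_j\gtrsim\log^3 m/\eps^2$, and the lemma only bounds that round's peeled set by $\CL(C)\,d_j$. So no summable schedule gets below $\CL(C)\log^3 m/\eps^2$ on this route. With your own accounting of $O(\log m)\cdot\CL(C)\cdot d$, convexity gives $\sum_j\eps_j^{-2}\ge L^3/\eps^2$ and hence $\CL(C)\log^4 m/\eps^2$. Your choice $\eps_j\approx\eps/(j\log^2 j)$ is worse still.

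The martingale alternative is not carried out, and it does not follow from the tools at hand. The counting bound controls only the intermediate codes, and their peeled sets depend on earlier samples. Exploiting cancellation across rounds therefore needs a union bound over entire trajectories of codewords of $C$. Done directly, a codeword of weight $W$ on $\bar T_0$ has accumulated variance about $2^L W$, against up to $(m+1)^{O(W/d)}$ such codewords. This closes only if $2^L\lesssim\eps^2 d/\log m$. Since you need $2^L\approx m/(\CL(C)d)$, that means $d\gtrsim\sqrt{m\log m/(\CL(C)\eps^2)}$, at which point the peeled set $T$ itself has size of order $\sqrt{m\,\CL(C)\log m}/\eps$.

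That observation points to the idea you are missing, which is what the paper does: do not keep $T$ exactly, but recurse on it too. At a node with $m'$ coordinates, the paper takes $\eta=\Theta(\log m(\log\log m)^2/\eps^2)$ and $d=\sqrt{m'\eta/\CL(C)}$. It samples $\bar T$ once at rate $p=\sqrt{\eta\CL(C)/m'}$, so $pd=\eta$, and the same $\alpha$-by-$\alpha$ union bound yields accuracy $1\pm\eps/(20\log\log m)$. Both $C|_T$ and the sample have $O(\sqrt{m'\eta\CL(C)})$ coordinates and chain length at most $\CL(C)$, and both are sparsified recursively.

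The ratio $m'/(\CL(C)\eta)$ is essentially square-rooted at each level, so the depth is $\log\log m$ and the per-level errors compose to $1\pm\eps$. The $2^{\log\log m}=\log m$ leaves each keep $O(\CL(C)\eta)$ coordinates, for a total of $O(\CL(C)\log^2 m(\log\log m)^2/\eps^2)$. So the $(\log\log m)^2$ comes from the recursion depth, not from a summable schedule over $\log m$ rounds.

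Separately, your worry about codewords of weight below $d$ on $\bar T$ is moot. A singleton subcode $\{c\}$ has chain length $1$, so $\Phi\le\wt(c)$ for every nonzero $c$. The decomposition in the proof of \cref{lem:CLDecomposition} stops only once $\Phi(C|_{\bar T})>d$ or the remaining support is empty. Your proposed patch via the exactly kept part on $T$ would not have worked in general anyway, since a codeword can have no mass on $T$.
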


The algorithm for achieving this sparsification is simple, and builds off of the intuition of \cite{KPS24, khanna2025efficient}. Namely, starting with a code $C \subseteq \zo^m$, we invoke the decomposition of \cref{lem:CLDecomposition} with parameter $d = \sqrt{\frac{m}{\CL(C)}}$. This yields two codes: $C_{\mathrm{Peel}}$ which contains the $\leq \CL(C) \cdot d \leq \sqrt{m \CL(C)}$ many coordinates that \cref{lem:CLDecomposition} peels off, along with $C_{\mathrm{remaining}}$, which contains all remaining coordinates, with the promise that in $C_{\mathrm{remaining}}$, for any $\alpha \in \Z^+$, the number of codewords in $(C_{\mathrm{remaining}})|_{\bar{T}}$ of weight $\leq \alpha d$ is at most $\binom{\mathrm{CL}(C)}{\alpha} \cdot (m+1)^{\alpha}$. We then sub-sample the coordinates of $C_{\mathrm{remaining}}$, and then recurse. The formal algorithm is presented below. 

\begin{algorithm}
    \caption{Sparsify$(C, \eps, \mathrm{counter}, m)$}\label{alg:sparsify}
    Let $\eta = \frac{1000 \log(m)}{(\eps / 20 \log\log(m))^2}$ 
    Let $d = \sqrt{\frac{m \eta}{\CL(C)}} $. \\
    \If{$\mathrm{counter} = \log\log(m)$}{
    \Return{$C$.}
    }
    Let $T$ be a set of coordinates as promised by \cref{lem:CLDecomposition} for $C$ with parameter $d$. \\
    Let $C_{\mathrm{Remaining}} = C|_{\bar{T}}$ and let $C_{\mathrm{Peel}} = C|_T$. \\
    Let $w \in \R$, $\widetilde{T} \subseteq \bar{T}$ be such that $|\widetilde{T}| \leq 2 \sqrt{\CL(C) \cdot m \eta} $, and $w \cdot C_{\widetilde{T}}$ is a $(1 \pm \eps / (20 \log\log(m))$ code sparsifier of $C_{\mathrm{Remaining}}$. \label{line:subsample}\\
    \Return{$w \cdot \mathrm{Sparsify}(C_{\widetilde{T}}, \eps, \mathrm{counter}+1, m) \cup \mathrm{Sparsify}(C_{\mathrm{Peel}}, \eps, \mathrm{counter}+1, m)$.}
\end{algorithm}

\smallskip
We now prove some basic claims regarding the above algorithm:

\begin{claim}
    When invoked on a code $C \subseteq \zo^m$, \cref{line:subsample} in \cref{alg:sparsify} is always possible. 
\end{claim}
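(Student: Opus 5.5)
The plan is a probabilistic-method argument: sample each coordinate of $\bar{T}$ independently with a common probability $p$ and give every kept coordinate weight $w = 1/p$. We then show that with positive probability the sample has size at most $2\sqrt{\CL(C)\, m \eta}$ and gives a $(1 \pm \eps/(20\log\log m))$ sparsifier of $C_{\mathrm{Remaining}}$. Set $\eps' = \eps/(20\log\log m)$, so that $\eta = 1000\log(m)/\eps'^2$. Choose
\[
p = \min\left(1, \frac{\eta}{d}\right) = \min\left(1, \sqrt{\frac{\eta\, \CL(C)}{m}}\right).
\]
If $p = 1$, take $\widetilde{T} = \bar{T}$ and $w = 1$. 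In that case $m \le \eta\, \CL(C)$, so $|\bar T| \le m \le \sqrt{m\eta\CL(C)}$ and we are done. Otherwise the expected sample size is $p|\bar T| \le pm = \sqrt{\CL(C) m\eta}$. Since $|\widetilde{T}|$ is a sum of independent indicators, Markov's inequality gives that $|\widetilde{T}| > 2\sqrt{\CL(C)m\eta}$ with probability at most $1/2$.

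For the sparsification guarantee, fix a codeword $c \in C_{\mathrm{Remaining}}$ of weight $\wt(c) \in (\alpha d, (\alpha+1) d]$ for some integer $\alpha \ge 0$; the zero codeword is trivially preserved. Apply \cref{clm:concentrationBound} with $\ell = \wt(c)$ and all $p_i = p$. Then $c$ fails to be preserved with probability at most
\[
2e^{-0.38\eps'^2 \wt(c) p} \le 2e^{-0.38 \eps'^2 \alpha d \cdot \eta/d} = 2e^{-380\alpha\log m}.
\]
By \cref{lem:CLDecomposition} applied with $\alpha+1$, the number of codewords in $C|_{\bar T}$ of weight at most $(\alpha+1)d$ is at most $\binom{\CL(C)}{\alpha+1}(m+1)^{\alpha+1} \le (m(m+1))^{\alpha+1}$. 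A union bound over level $\alpha$ therefore gives failure probability about $(m+1)^{2\alpha+2}\cdot 2 m^{-380\alpha}$. For $\alpha \ge 1$ this is tiny, roughly $m^{-370\alpha}$, and summing over $\alpha \ge 1$ gives $o(1)$.

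The main obstacle is the level $\alpha = 0$: codewords of weight at most $d$, where the exponent $\wt(c) p \eta\cdot$ may be small. There are two ways to handle it. The first is to bound these codewords by the $\alpha=1$ count, of which there are at most $\CL(C)(m+1)$, and use the true weight: each coordinate has weight at least $1$, so the failure probability is at most $2e^{-0.38\eps'^2 p}$, which may not be small. The second, which I would use, is to shift the levels by one, so that a codeword of weight in $((\alpha-1)d, \alpha d]$ is indexed by $\alpha$. That does not help the smallest level, since weight can be as small as $1$, so small-weight codewords must be handled directly.

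The direct handling is as follows. Codewords with $\wt(c) \le d$ number at most $\CL(C)(m+1)$. For these, I would argue that $p\,\wt(c)$ might be below $\eta$, and choose instead $p = \min(1, 2\eta/d)$-type constants, requiring the exponent bound $0.38\eps'^2 p \wt(c) \ge 3\log m$ only when $\wt(c) \ge d/100$. For even smaller weights, I would attempt to argue that Lemma~\ref{lem:CLDecomposition}'s choice of $T$ ensures that every nonzero codeword of $C|_{\bar T}$ has weight greater than $d$. The reasoning is that $\Phi(C|_{\bar T}) > d$, and a single nonzero codeword $c$ forms a subcode with chain length at least $1$ and support $\wt(c)$, so $\wt(c)/\CL(\{c\}) = \wt(c) > d$. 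This is the key step: it shows that level $\alpha = 0$ is empty apart from the zero codeword. Thus every nonzero codeword has $\alpha \ge 1$ in the shifted indexing, the union bound above applies, and combining it with the size bound yields a choice of $\widetilde T$ with positive probability.
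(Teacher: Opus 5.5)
Your argument is correct and is essentially the paper's: you sample $\bar T$ uniformly at rate $p=\sqrt{\eta\,\CL(C)/m}$ with weight $1/p$, apply \cref{clm:concentrationBound} using $\wt(c)\,p\ge \alpha\eta$, union bound over weight levels with the counting bound of \cref{lem:CLDecomposition}, and bound $|\widetilde T|$ (you use Markov where the paper uses Chernoff, and you also handle the $p\ge 1$ case, which the paper ignores). Your key observation is that the $T$ built in the proof of \cref{lem:CLDecomposition} has $\Phi(C|_{\bar T})>d$, so every nonzero codeword of $C_{\mathrm{Remaining}}$ has weight $>d$ (a single nonzero codeword has chain length $1$); this makes explicit a point the paper's proof silently relies on, since it only treats weights in $[\alpha d,2\alpha d]$ with $\alpha\ge 1$, and once you have it you should drop the exploratory detours (in particular raising $p$ to $2\eta/d$, which would break your Markov size bound) and keep the unshifted levels $\wt(c)\in(\alpha d,(\alpha+1)d]$ with $\alpha\ge 1$.
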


\begin{proof}
    After performing the decomposition of \cref{lem:CLDecomposition} with parameter $d = \sqrt{\frac{m\cdot \eta}{\CL(C)}} $, it must be the case that in $C_{\mathrm{remaining}}$, for any $\alpha \in \Z^+$, the number of codewords in $(C_{\mathrm{remaining}})$ of weight $\leq \alpha d$ is at most $\binom{\mathrm{CL}(C)}{\alpha} \cdot (m+1)^{\alpha}$. Thus, if one randomly samples the coordinates of $C_{\mathrm{remaining}}$ at rate $p = \sqrt{\frac{\eta \CL(C)}{m}}$, and assigns weight $1/p$ to the sampled coordinates a simple Chernoff and union bound will show that every codeword's weight is preserved to a $(1 \pm \eps / 20 \log\log(m))$ factor with probability $1 - 1 / \mathrm{poly}(m)$. Indeed, for a fixed value of $\alpha$, a codeword of weight $\in [\alpha d, 2 \alpha d]$ will have its weight preserved to within a $(1 \pm \eps / 20 \log\log(m))$ factor with probability (using \cref{clm:concentrationBound}) at least
    \begin{align*}
    1 - 2e^{-0.38 (\eps / 20 \log\log(m))^2 \cdot \alpha d \cdot \sqrt{\frac{\eta \CL(C)}{m}}} &\geq 1 - 2e^{-0.38 (\eps / 20 \log\log(m))^2 \cdot \alpha \eta}\\
    &\geq 1 - 2e^{-0.38 \cdot \alpha \cdot 1000 \log(m)}\\
    &\geq 1 - \frac{1}{m^{100 \alpha}}.
    \end{align*}
    Then, we can take a union bound over all codewords of weight $\leq 2 \alpha d$, at most $\binom{\mathrm{CL}(C)}{2\alpha} \cdot (m+1)^{2\alpha} \leq (m+1)^{4 \alpha}$ of them, 
    and then a union bound over the at most $m$ choices of $\alpha$ to conclude that every codeword has its weight preserved to a $(1\pm \eps / (20 \log\log(m)))$ factor with probability $1 - \frac{1}{m^{95}}$ (in particular such a sparsification \emph{exists}).

    At the same time, a Chernoff bound will show that when sampling at this rate, with probability $\Omega(1)$, at most $2 \cdot \sqrt{\CL(C) \cdot \eta \cdot m}$ many coordinates will survive the sampling, and thus there exists a set of coordinates $|\widetilde{T}| \leq 2 \sqrt{\CL(C) \cdot \eta\cdot m} $, and weight $w \in \R$ such that $w \cdot C_{\widetilde{T}}$ is a $(1 \pm \eps / (20 \log\log(n))$ code sparsifier of $C_{\mathrm{Remaining}}$.
\end{proof}

\begin{claim}\label{clm:oneRoundAccurate}
    When invoked on a code $C \subseteq \zo^m$, $w \cdot C_{\widetilde{T}} \cup C_{\mathrm{Peel}}$ as produced in \cref{line:subsample} is a $(1 \pm \eps / 20\log\log(m))$ code sparsifier of $C$.
\end{claim}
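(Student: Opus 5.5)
The plan is to use the fact that the coordinate set $[m]$ splits into the two disjoint parts $T$ and $\bar{T}$. Every codeword's weight then splits additively across these parts, so the claim will follow from one coordinate-wise decomposition together with the sparsification guarantee of \cref{line:subsample}, which the preceding claim shows is always achievable.

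Fix an arbitrary codeword $c \in C$ and write $c = (c|_T, c|_{\bar{T}})$. Since $T$ and $\bar{T}$ partition the coordinates,
\[
\langle \one, c\rangle = \wt(c|_T) + \wt(c|_{\bar{T}}).
\]
The combined object $w \cdot C_{\widetilde{T}} \cup C_{\mathrm{Peel}}$ is the reweighting $\widetilde{w}$ of $[m]$ given by $\widetilde{w}(i) = 1$ for $i \in T$, $\widetilde{w}(i) = w$ for $i \in \widetilde{T}$, and $\widetilde{w}(i) = 0$ otherwise. Under this reweighting,
\[
\langle \widetilde{w}, c\rangle = \wt(c|_T) + w\cdot \wt(c|_{\widetilde{T}}).
\]

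The first term is preserved exactly, because the peeled coordinates are kept with their original unit weight. For the second term, note that $c|_{\bar{T}} \in C_{\mathrm{Remaining}}$. By the defining property of $\widetilde{T}, w$ in \cref{line:subsample}, namely that $w\cdot C_{\widetilde{T}}$ is a $(1\pm \eps/(20\log\log m))$ sparsifier of $C_{\mathrm{Remaining}}$, we get
\[
w\cdot \wt(c|_{\widetilde{T}}) \in (1 \pm \eps/(20\log\log m))\, \wt(c|_{\bar{T}}).
\]
Adding a quantity preserved exactly to one preserved up to a $(1\pm\delta)$ factor, with both quantities nonnegative, gives a total preserved up to a $(1\pm\delta)$ factor. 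This yields $\langle \widetilde{w}, c\rangle \in (1\pm \eps/(20\log\log m))\langle \one, c\rangle$. Since $c$ was arbitrary, the claim follows.

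The only point needing care is bookkeeping rather than a real obstacle. One must check that the sparsifier guarantee for $C_{\mathrm{Remaining}}$ applies to every restriction $c|_{\bar{T}}$, which holds because $C_{\mathrm{Remaining}}$ is by definition the set of all such restrictions. One must also check that the union notation of the earlier remark really means weight $1$ on $T$ and weight $w$ on $\widetilde{T}$, and that $\widetilde{T} \subseteq \bar{T}$ is disjoint from $T$, so the two parts do not overlap.
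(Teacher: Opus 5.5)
Your proposal is correct and follows essentially the same route as the paper. The paper observes that $C_{\bar{T}} \cup C_{\mathrm{Peel}}$ reproduces every codeword's weight exactly and then substitutes the sparsifier of $C_{\bar{T}}$ ``by composition''; your split into an exactly preserved $T$-part and a $(1\pm\delta)$-preserved $\bar{T}$-part, using nonnegativity, is that same argument with the composition step written out.
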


\begin{proof}
    This follows because $C_{\bar{T}} \cup C_{\mathrm{Peel}}$ is a perfect sparsifier of $C$. Then, when replacing $C_{\bar{T}}$ with a $(1 \pm \eps / 20\log\log(m))$ sparsifier of $C_{\bar{T}}$, it follows by composition that the resulting code is a $(1 \pm \eps / 20\log\log(m))$ code sparsifier of $C$.
\end{proof}

\begin{claim}\label{clm:accuracy}
    Let $C \subseteq \zo^m$ be a code and let $\eps > 0$. Then $\mathrm{Sparsify}(C, \eps, 0)$ returns a code $\widetilde{C}$ which is a $(1 \pm \eps)$ sparsifier of $C$.
\end{claim}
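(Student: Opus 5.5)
The plan is to prove by induction on the recursion depth that error compounds multiplicatively across levels, each level contributing at most a $(1 \pm \eps/(20\log\log m))$ factor. Since the recursion stops once $\mathrm{counter} = \log\log(m)$, the total error is at most $(1 \pm \eps/(20\log\log m))^{\log\log m} \subseteq (1\pm \eps)$.

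Concretely, I will prove the following by downward induction on $\mathrm{counter} = j$: for any code $C$ (with arbitrary positive weights in front) handed to $\mathrm{Sparsify}(C,\eps,j,m)$, the output is a $(1\pm\delta)^{\log\log(m) - j}$-sparsifier of $C$, meaning every codeword's weight is multiplied by a factor in $[(1-\delta)^{L-j},(1+\delta)^{L-j}]$. Here $\delta = \eps/(20\log\log m)$ and $L=\log\log m$.

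The base case $j = L$ is immediate, since $C$ is returned unchanged and the factor is exactly $1$. For the inductive step, fix a codeword $c \in C$ and split it as $c = (c|_{T}, c|_{\bar T})$. By \cref{clm:oneRoundAccurate}, $w\cdot C_{\widetilde T} \cup C_{\mathrm{Peel}}$ preserves the weight of $c$ up to $(1\pm\delta)$. More precisely, $w\cdot \wt(c|_{\widetilde T}) \in (1\pm\delta)\wt(c|_{\bar T})$, and the peeled part is kept exactly.

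The inductive hypothesis applies to each of the two recursive calls, which operate on disjoint coordinate sets. It gives that $\mathrm{Sparsify}(C_{\widetilde T},\ldots)$ approximates $\wt(c|_{\widetilde T})$, and $\mathrm{Sparsify}(C_{\mathrm{Peel}},\ldots)$ approximates $\wt(c|_T)$, each within $(1\pm\delta)^{L-j-1}$. Scaling by $w\ge 0$ preserves multiplicative guarantees. Because the two parts have disjoint supports and nonnegative weights, a sum of terms each approximated within a common multiplicative window stays within that window. Combining gives the factor $(1\pm\delta)^{L-j}$ for $c$. One subtlety: codewords of $C_{\widetilde T}$ are restrictions of codewords of $C$, so the guarantee for $c|_{\widetilde T}$ is exactly what the recursive call provides.

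Finally, I set $j=0$ and bound $(1+\delta)^L \le e^{\eps/20} \le 1+\eps$ and $(1-\delta)^L \ge 1 - L\delta = 1-\eps/20 \ge 1-\eps$. The main obstacle is bookkeeping rather than anything deep. The point to handle carefully is that the inner guarantee is one-sided per piece, and I must combine it for sums of disjoint pieces rather than through any subtraction. I would state the inductive claim as a two-sided multiplicative interval so that summation closes cleanly.
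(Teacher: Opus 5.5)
Your proof is correct and is essentially the paper's argument. Both induct over the recursion depth, use the one-round guarantee of \cref{clm:oneRoundAccurate} at each level, and compose the $(1\pm\eps/(20\log\log m))$ factors across the disjoint pieces $T$ and $\widetilde T \subseteq \bar T$. The only difference is bookkeeping: the paper inducts forward over levels on the union of all $2^i$ subcodes with an additive window $(1\pm 3i\eps/(20\log\log m))$, while you induct downward per recursive call with the multiplicative window $(1\pm\delta)^{L-j}$, which is equally valid and slightly cleaner.
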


\begin{proof}
    This follows by induction over the value of $\mathrm{Counter}$. Indeed, when sparsifying the code $C$, we consider all the resulting codes for which $\mathrm{Sparsify}$ is recursively called. We let $w_{i, 1} \cdot C^{(i)}_1, \dots w_{i, 2^i} \cdot C^{(i)}_{2^i}$ denote all $2^i$ codes for which the $\mathrm{Sparsify}$ function is called with $\mathrm{Counter} = i$. We inductively claim that $w_{i, 1} \cdot C^{(i)}_1 \cup  \dots \cup w_{i, 2^i} \cdot C^{(i)}_{2^i}$ is a $(1 \pm 3 i \eps / 20 \log\log(m))$ sparsifier of $C$.

    The base case follows trivially, using \cref{clm:oneRoundAccurate}. 
    
    The inductive case follows by observing that for each code $w_{i, j} \cdot C^{(i)}_j$, by invoking \cref{clm:oneRoundAccurate}, we replace $C^{(i)}_j$ by two codes $w_{i+1,2j } C^{(i+1)}_{2j}$ and $w_{i+1,2j +1} \cdot C^{(i)}_{2j+1}$ such that 
    \[
    w_{i+1,2j } C^{(i+1)}_{2j} \cup w_{i+1,2j +1} \cdot C^{(i)}_{2j+1}
    \]
    is a $(1 \pm \eps/ 20 \log\log(m))$ sparsifier of $w_{i, j} \cdot C^{(i)}_j$. By composition over all the codes $w_{i, 1} \cdot C^{(i)}_1 \cup  \dots \cup w_{i, 2^i} \cdot C^{(i)}_{2^i}$, it follows then that 
    \[
    w_{i+1, 1} \cdot C^{(i+1)}_1 \cup  \dots \cup w_{i+1, 2^{i+1}} \cdot C^{(i+1)}_{2^{i+1}}
    \]
    will be a $(1 \pm \eps / 20 \log\log(m)) \cdot (1 \pm 3\eps i / 20 \log\log(m)) \in (1 \pm 3\eps (i+1) / 20 \log\log(m))$-sparsifier.

    Taking $i = \log\log(m)$ yields our desired claim. 
\end{proof}

\begin{claim}\label{clm:space}
    Let $C \subseteq \zo^m$ be a code and let $\eps > 0$. Then $\mathrm{Sparsify}(C, \eps, 0)$ returns a code $\widetilde{C}$ which retains only $O \left ( \frac{\CL(C) \log^2(m) (\log\log(m))^2 }{\eps^2}\right )$ many coordinates.
\end{claim}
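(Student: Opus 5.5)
The plan is to analyze the recursion tree of $\mathrm{Sparsify}$ level by level. I will track the maximum number of coordinates of any code handled at recursion depth $i$, and bound the total number of retained coordinates by summing over the at most $2^{\log\log m} = \log m$ leaves at depth $\log\log(m)$.

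Throughout, $\eta = \Theta(\log(m)(\log\log m)^2/\eps^2)$ is fixed, since it depends on the original $m$. Write $L = \CL(C)$ for the top-level chain length. Every recursive call is on a coordinate restriction of $C$, and coordinate restriction can only shorten chains (a chain in $C|_S$ lifts to one in $C$). Hence every code in the tree has chain length at most $L$.

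The key step is the per-node size bound. Consider a call on a code with $m'$ coordinates and chain length $\ell \le L$, where $d = \sqrt{m'\eta/\ell}$ is computed from its own size $m'$; I read the $m$ in $d$ as the current code's length.
\begin{itemize}
\item The peeled child has at most $\ell d = \sqrt{m'\ell\eta} \le \sqrt{m' L \eta}$ coordinates.
\item The sampled child has at most $2\sqrt{\ell m'\eta} \le 2\sqrt{m' L\eta}$ coordinates.
\end{itemize}
So both children have size at most $2\sqrt{m' L\eta}$. Setting $x_i = m_i/(L\eta)$, where $m_i$ is the maximum size at depth $i$, this becomes $x_{i+1} \le 2\sqrt{x_i}$. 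With $x_0 \le m$, induction gives $x_i \le 4\, m^{2^{-i}}$, since $x_{i+1} \le 2\cdot 2\, m^{2^{-i-1}}$. At depth $i = \log\log m$ we have $m^{2^{-i}} = 2$ (taking $\log$ base 2), so every leaf has $O(L\eta)$ coordinates.

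Summing over the $\log m$ leaves gives at most $O(L\eta\log m) = O(L\log^2(m)(\log\log m)^2/\eps^2)$ retained coordinates, which is the claimed bound.

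The main obstacle is this bookkeeping: making sure the recursion uses the current length $m'$ in $d$ while keeping the fixed $\eta$, and that the factor-2 constants do not blow up across the $\log\log m$ levels. The bound $x_i \le 4m^{2^{-i}}$ handles this, because the 2's are absorbed under square roots.

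A second point to address is that when $m' < L\eta$, the decomposition and sampling are vacuous, and the sampling rate $p = \sqrt{\eta\ell/m'}$ could exceed 1. In that case I simply keep the code as is, since its size is already $O(L\eta)$, so the bound is unaffected.
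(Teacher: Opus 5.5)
Your proposal is correct and follows essentially the same route as the paper. Both argue by induction over recursion depth that every call at depth $i$ has support $O(\CL(C)\,\eta\, m^{1/2^i})$ (the paper uses $4\CL(C)\eta(m/\CL(C))^{1/2^i}$), so each of the $\log m$ leaves keeps $O(\CL(C)\eta)$ coordinates. Your explicit remarks that coordinate restriction cannot increase chain length, and that $d$ must be computed from the current support size, spell out what the paper's proof uses only implicitly.
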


\begin{proof}
    We claim that whenever $\mathrm{Sparsify}$ is invoked on a code $C'$ with $\mathrm{Counter} = i$, then 
    \[
    |\Supp(C')| \leq 4 \cdot \CL(C) \cdot \left ( \frac{m}{\CL(C)} \right )^{1 / 2^i} \cdot \eta.
    \]
    The base case follows trivially: indeed, in the first invocation ($i=1$) of the algorithm, $|\Supp(C_{\mathrm{Peel}})| \leq \CL(C) \cdot \sqrt{\frac{m\cdot \eta}{\CL(C)}} $ which trivially satisfies the above bound, and $|\Supp(C_{\widetilde{T}})| \leq 2 \sqrt{\CL(C) \cdot m \cdot \eta} \leq 2 \cdot \CL(C) \cdot \sqrt{\frac{m}{\CL(C)}} \cdot \eta$, which also trivially satisfies the above bound. Now, we assume the claim holds by induction.

    Then, at level $i$ of the algorithm 
    \[
    |\Supp(C')| \leq 4 \CL(C) \cdot \left ( \frac{m}{\CL(C)} \right )^{1 / 2^i} \cdot \eta.
    \]
    When we sparsify $C'$, we obtain two codes, $C'_{\mathrm{Peel}}$ and $C'_{\widetilde{T'}}$. We have that (by our choice of $d$ in \cref{alg:sparsify})
    \[
    |\Supp(C'_{\mathrm{Peel}})| \leq \CL(C) \cdot \sqrt{\frac{4\CL(C) \cdot \left ( \frac{m}{\CL(C)} \right )^{1 / 2^i} \cdot \eta \cdot \eta}{\CL(C)}} \leq 4 \CL(C) \cdot \eta \cdot \left ( \frac{m}{\CL(C)} \right )^{1 / 2^{i+1}},
    \]
    as we desire. Likewise, 
    \[
    |\Supp(C'_{\widetilde{T'}})| \leq 2 \sqrt{\CL(C) \cdot 4\CL(C) \cdot \left ( \frac{m}{\CL(C)} \right )^{1 / 2^i} \cdot \eta \cdot \eta} \leq 4 \CL(C) \cdot \eta \cdot \left ( \frac{m}{\CL(C)} \right )^{1 / 2^{i+1}}
    \]
    \[
    \leq 4\CL(C) \cdot \eta \cdot \left ( \frac{m}{\CL(C)} \right )^{1 / 2^{i+1}},
    \]
    as we desire. 

    Now, once $i = \log\log(m)$, we have that 
    \[
    |\Supp(C')| \leq 4\CL(C) \cdot \left ( \frac{m}{\CL(C)} \right )^{1 / 2^{\log\log(m)}} \cdot \eta \leq 8 \CL(C) \cdot \eta.
    \]

    Thus, the returned code $\widetilde{C}$ is the union of $\leq 2^{\log\log(m)} = \log(m)$ many codes, each of which retains $\leq 8 \CL(C) \cdot \eta$ many coordinates. Thus, the total sparsifier size is bounded by \[
    \log(m) \cdot 8\CL(C) \cdot \eta = O \left ( \frac{\CL(C) \log^2(m) (\log\log(m))^2 }{\eps^2}\right ),
    \]
    as we desire.
\end{proof}

Together, these above claims give the following theorem:

\begin{theorem}\label{thm:CLSparsifier}
    For a code $C \subseteq \zo^m$ and $\eps > 0$, there exists a $(1 \pm \eps)$ code sparsifier of $C$ which retains only $O \left ( \frac{\CL(C) \log^2(m) (\log\log(m))^2 }{\eps^2}\right )$ many coordinates.
\end{theorem}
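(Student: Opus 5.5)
The plan is to assemble Theorem~\ref{thm:CLSparsifier} from the claims about \cref{alg:sparsify} proved just above. I will run $\mathrm{Sparsify}(C, \eps, 0, m)$ on the unweighted code $C \subseteq \zo^m$, using the existential choices in \cref{line:subsample}. I will then read off accuracy from \cref{clm:accuracy} and size from \cref{clm:space}.

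First, I need each recursive call to be well defined. The claim that \cref{line:subsample} is always possible covers this. It combines \cref{lem:CLDecomposition} with parameter $d = \sqrt{m\eta/\CL(C)}$, the counting bound of at most $\binom{\CL(C)}{\alpha}(m+1)^{\alpha}$ codewords of weight $\le \alpha d$, and \cref{clm:concentrationBound}. A union bound over dyadic weight classes $[\alpha d, 2\alpha d]$ then shows that a uniform reweighting $w\cdot C_{\widetilde T}$ with $|\widetilde T| \le 2\sqrt{\CL(C) m\eta}$ exists.

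One point needs checking when this is applied to subcodes deeper in the recursion. There the parameter $m$ is still the global $m$, while the support shrinks to $m'$. I will use the monotonicity $\CL(C') \le \CL(C)$ for restrictions and subcodes, which follows from \cref{clm:chainlengthdecrease} or directly from the definition. I will also use that $|C'| \le (m+1)^{\CL}$ with the global $m$, so the same $\eta = \Theta(\log m\,(\log\log m)^2/\eps^2)$ still gives failure probability $m^{-\Omega(\alpha)}$. This lets every level use the same sampling argument with $d = \sqrt{m'\eta/\CL(C)}$.

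Next, accuracy. \cref{clm:oneRoundAccurate} says each split loses a factor of at most $(1\pm \eps/(20\log\log m))$, since the peeled part is kept exactly and only the remaining part is resampled. Composing these over the $\log\log m$ levels, as in \cref{clm:accuracy}, the total error is $(1\pm\eps')^{\log\log m}$ with $\eps' = \eps/(20\log\log m)$, which lies within $(1\pm \eps)$ once $\eps<1$.

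Finally, size. By induction, a call at depth $i$ has support at most $4\CL(C)\,\eta\,(m/\CL(C))^{2^{-i}}$, because both $|T|\le \CL\cdot d$ and $|\widetilde T|$ scale like the square root of the current support. At depth $\log\log m$ the factor $(m/\CL)^{1/\log m}$ is at most $2$. Summing over the $2^{\log\log m} = \log m$ leaves gives $O(\CL(C)\,\eta\log m) = O(\CL(C)\log^2 m(\log\log m)^2/\eps^2)$ coordinates.

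I expect the main obstacle to be the bookkeeping in the recursion. At deeper levels the union bound must use the global $m$ (or $\CL$) rather than the shrinking support, so that $\eta$ stays fixed. The induction must also handle the square root correctly, with constants chosen so that $\sqrt{4\CL\cdot S\,\eta}\cdot 2 \le 4\CL\eta (m/\CL)^{2^{-(i+1)}}$. If the constants don't close, I would inflate the constant $4$ to a slightly larger one, which only changes the final bound by a constant factor.
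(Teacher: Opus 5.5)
Your proposal is correct and matches the paper's proof, which runs \cref{alg:sparsify} and reads off accuracy from \cref{clm:accuracy} and size from \cref{clm:space}. Your extra bookkeeping is what those claims implicitly rely on: $d$ is set from the current support size, $\eta$ stays tied to the global $m$, and chain length is monotone under restriction. The size induction also closes with the constant $4$ as written, since the recurrence is $|\widetilde T| \le 2\sqrt{\CL(C)\,\eta\,S_i}$ rather than the expression with an extra factor $4$ inside the square root.
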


\begin{proof}
    We invoke \cref{alg:sparsify}. The accuracy of the sparsifier follows from \cref{clm:accuracy} and the size follows from \cref{clm:space}.
\end{proof}

Note that at this point, one can use \cref{thm:CLSparsifier} along with weighted to unweighted sparsifier reduction frameworks as in \cite{KPS24, khanna2025efficient, brakensiek2025redundancy} to deduce sparsifiers for \emph{weighted codes} which preserve $O \left ( \frac{\CL(C) \log^2(m) (\log\log(m))^2 }{\eps^2}\right )$ many coordinates.

\begin{corollary}\label{cor:weightedCLSparsifier}
        For a code $C \subseteq \zo^m$, weights $w: [m] \rightarrow \R_{\geq 0}$ and $\eps > 0$, there exists a $(1 \pm \eps)$ code sparsifier of $C$ which retains only $O \left ( \frac{\CL(C) \log^2(m) (\log\log(m))^2 }{\eps^2}\right )$ many coordinates.
\end{corollary}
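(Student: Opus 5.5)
The plan is to reduce the weighted case to \cref{thm:CLSparsifier} by bucketing weights and duplicating coordinates, in the style of the reductions in \cite{KPS24, khanna2025efficient, brakensiek2025redundancy}. The key structural fact is that duplicating a coordinate does not change chain length. In a chain, the maps $a$ and $c$ are injective. If $a(i)$ and $a(j)$ were copies of the same original coordinate with $i<j$, then $c(i)_{a(i)}=1$ would force $c(i)_{a(j)}=1$, contradicting condition 2. Hence every chain in the duplicated code uses at most one copy of each original coordinate, and projects to a chain in $C$; conversely, any chain in $C$ lifts to one. For the same reason, deleting coordinates or restricting to a subset of coordinates never increases $\CL$.

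\textbf{Step 1: weight classes.} First I normalize and discard negligible weights. Let $W=\max_i w(i)$. Coordinates with $w(i)\le \eps W/m^2$ contribute at most $\eps/m$ of the weight of any codeword containing the heaviest coordinate. This is not uniformly negligible, however, so I will instead use a multiscale decomposition. Round each weight to a power of $(1+\eps)$ at cost $(1\pm\eps)$, and partition the coordinates into classes $A_k=\{i: w(i)\in[2^k,2^{k+1})\}$. Inside one class, I replace coordinate $i$ by $\lfloor w(i)/(\eps 2^k)\rfloor$ unit copies, which is at most $2/\eps$ copies each. By the fact above this preserves $\CL(C|_{A_k})\le\CL(C)$, and it approximates $w$ on $A_k$ to within $(1\pm\eps)$ up to scaling. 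Applying \cref{thm:CLSparsifier} to each expanded unweighted code gives a sparsifier of each class. The support is bounded by the number of distinct original coordinates used, with weights summed over copies. The number of atoms $m'$ grows only by a $1/\eps$ factor, which is harmless inside the logarithms up to constants when $\eps\ge 1/\mathrm{poly}(m)$.

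\textbf{Step 2: avoid paying for the number of classes.} Summing over all classes would multiply the size by the number of distinct scales, which is unbounded. This is the main obstacle. To handle it, I would group classes so that the total chain length across groups is $O(\CL(C))$. The approach: process the classes from heaviest to lightest, and within each class, remove the coordinates peeled off, in the spirit of \cref{clm:chainlengthdecrease}. More concretely, I would treat the full expanded code at once, with heavy classes represented by many copies, and run \cref{alg:sparsify} directly on it. The only non-uniformity is the total count $m'=\sum_i \lceil w(i)/w_{\min}\rceil$, which may be huge.

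To control $m'$, I would first cap the weight ratio to $\mathrm{poly}(m/\eps)$ using the standard trick. Split the coordinates into a heavy part and a light part, where the light coordinates have total weight below $\eps/m$ times the lightest heavy weight. For any codeword meeting a heavy coordinate, the light part is negligible. Codewords supported only on light coordinates are handled recursively on $C|_{\text{light}}$. To keep the total at $O(\CL(C))$ rather than $\CL$ times the number of scales, I would only recurse on the code $\{c: c \text{ vanishes on heavy}\}$, which is the same as contracting the heavy support. By the contraction claim, its chain length drops by the chain length of the heavy part, exactly as in \cref{clm:chainlengthdecrease}. Summing the sparsifier sizes $O(\CL_j\log^2 m'(\log\log m')^2/\eps^2)$ over the pieces, with $\sum_j\CL_j\le\CL(C)$ and $m'\le\mathrm{poly}(m/\eps)$ in each piece, gives the claimed bound. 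The delicate point is verifying that the light coordinates, which are dropped when sparsifying heavy-touching codewords, truly cost only a $(1\pm\eps)$ factor for every such codeword. This holds because their total weight is at most $\eps/m$ of any single heavy weight.
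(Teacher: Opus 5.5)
\textbf{There is a genuine gap in Step 2.} It sits in your claim that each piece has $m' \le \mathrm{poly}(m/\eps)$.

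Your heavy/light split needs $w(L) < (\eps/m)\min_{H} w$. That condition forces every light weight below every heavy weight, so it needs a multiplicative gap of about $m/\eps$ in the sorted weights, and no such gap need exist. Take $w(i)=m^{i}$. Any split $H=\{i\ge k\}$, $L=\{i<k\}$ with $L\neq\emptyset$ has $w(L)\ge m^{k-1}=\min_H w/m>(\eps/m)\min_H w$. So the only admissible choice is $L=\emptyset$, and the heavy part has weight ratio $m^{m-1}$. Duplicating it gives $\log m'=\Theta(m\log m)$, and the bound from Theorem~\ref{thm:CLSparsifier} becomes vacuous. If you instead cut at a fixed scale regardless of gaps, the light part is no longer negligible for codewords that meet $H$ only at its lightest weights. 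No partition of the \emph{coordinates} into heavy and light gives both polynomially bounded ratios and negligible light contributions.

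There is also a smaller unaddressed point. In your recursion the light coordinates are not dropped. They carry the new weights chosen by the sparsifier of $\{c: c|_H=0\}$, which is constrained only on that subcode. Bounding their contribution to heavy-touching codewords via the original $w$ is therefore not enough. This is repairable: each retained coordinate's new weight is at most $(1+\eps)$ times the weight of some subcode codeword containing it. But it has to be argued.

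\textbf{The missing idea is to decompose the codewords rather than the coordinates.} The paper uses fixed-width groups $I_t$ with weights in $[m^{3t},m^{3t+3})$. It defines $\mathrm{type}(c)$ as the largest group meeting $\supp(c)$. For each $t$ it sparsifies $(C_t\cup C_{t+1})|_{I_t}$ together with the all-ones vector on $I_t$, using the bounded-ratio duplication argument, which is where your chain-length-preserving duplication fact is used.

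A codeword of type $t$ is then handled correctly on $I_{t-1}\cup I_t$. Its coordinates in groups $\le t-2$ are negligible in original weight, and the all-ones vectors cap their reweighted values too.

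The overlapping windows cost only a factor $2$ in chain length. Codewords of type $\le t+1$ vanish on $I_{t+2}$, so chains for groups of the same parity concatenate block-triangularly. This gives $\sum_t \CL((C_t\cup C_{t+1})|_{I_t})\le 2\CL(C)$.

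Your observation that $\CL(\{c:c|_H=0\})+\CL(C|_H)\le\CL(C)$ is correct and is the same block-triangular fact. It just has to be applied to this type-based decomposition with overlapping windows, not to a gap-based split.
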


For completeness, we include a proof of Corollary~\ref{cor:weightedCLSparsifier} in Appendix~\ref{appendix}.

\subsection{Dimension-Free Bounds}\label{subsec:dimension-free}

In this section, we show that the dependence on $\log(m)$ can be \emph{removed}, and instead replaced with dependence only on $\log(\CL)$, thus constituting a dimension-free sparsification result. To do this, we merely repeatedly apply the sparsification result of \cref{cor:weightedCLSparsifier}:

\begin{theorem}\label{thm:main}
     For a code $C \subseteq \zo^m$, weights $w: [m] \rightarrow \R_{\geq 0}$ and $\eps > 0$, there exists a $(1 \pm \eps)$ code sparsifier of $C$ which retains only 
     \[
     O \left ( \frac{ \CL(C) \cdot \log^2(\CL(C)/\eps) \cdot \log\log(\CL(C)/\eps)^2}{\eps^2} \right )
     \]
     many coordinates.
\end{theorem}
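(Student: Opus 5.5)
We will obtain \cref{thm:main} by applying \cref{cor:weightedCLSparsifier} repeatedly, each time to the output of the previous round, with a geometrically shrinking error budget. The point is that the support shrinks from $m$ to something polylogarithmic in $m$ times $\CL(C)$. So after a few rounds the $\log m$ factor is replaced by the logarithm of the current support size, which is eventually $\mathrm{poly}(\CL(C)/\eps)$.

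\textbf{Setup.} Set $n = \CL(C)$ and $m_0 = m$. Define weights $w_0 = w$ and, for $j \ge 1$, let $w_j$ be the output of \cref{cor:weightedCLSparsifier} applied to the code $C|_{\supp(w_{j-1})}$ with weights $w_{j-1}$ and accuracy $\eps_j = \eps/2^{j+1}$.

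Two facts make each round legal.
\begin{itemize}
\item Restricting a code to a subset of coordinates cannot increase chain length, since a chain in the restriction lifts to a chain in $C$. So every round may use the bound $\CL \le n$.
\item Coordinates outside $\supp(w_{j-1})$ contribute nothing, so sparsifying the restricted weighted code is the same as sparsifying $(C, w_{j-1})$.
\end{itemize}

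\textbf{Accuracy.} Composing the rounds, the final weights lie within $\prod_j (1\pm \eps_j)$ of the originals. Since $\sum_j \eps_j \le \eps/2$, this product is contained in $(1\pm\eps)$ for $\eps<1$.

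\textbf{Support recursion.} The support sizes obey
\[
m_j \le K\, \frac{n\, 4^{j}\, \log^2 m_{j-1}\, (\log\log m_{j-1})^2}{\eps^2}
\]
for an absolute constant $K$. Write $L = \log(n/\eps)$. The term $\log m_j$ is then at most $O(L + j) + O(\log\log m_{j-1})$. Iterating, $\log m_j$ falls from $\log m$ to $O(L)$ after about $\log^* m$ rounds.

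\textbf{Main obstacle.} The factor $4^j$ grows with the number of rounds, so we cannot simply run until $m_j$ stabilizes. I would handle this as follows.
\begin{itemize}
\item Keep $\eps_j = \eps/2^{j+1}$ only while $\log m_{j-1} \ge C_0 L$, for a suitable constant $C_0$. In this phase the support is still large, so a $4^j$ loss is absorbed: $\log\log$-type shrinkage dominates an additive $O(j)$ in the logarithm.
\item Once $\log m_{j-1} = O(L)$, perform one final round with accuracy $\eps/2$. This round yields $O(n L^2 (\log L)^2/\eps^2)$ coordinates, which is the claimed bound.
\end{itemize}

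The delicate bookkeeping is confirming that the number of rounds $J$ before the final one is small, namely $J = O(\log^* m)$. Then every intermediate error $\eps/2^{j+1}$ stays summable. The intermediate sizes only need $\log m_j$ to decrease, and they do not need to achieve the final bound themselves. To guarantee that $\log m_j$ keeps decreasing, I would choose the error schedule in terms of $\log m_{j-1}$ rather than $j$, for instance $\eps_j = \eps/(10\log m_{j-1})$. This keeps the sum of errors bounded by $\eps/2$, because $\log m_{j-1}$ decreases super-geometrically. If that turns out to be awkward, a cruder fallback suffices: a constant number of rounds, each with error $\eps/4$, already brings $\log m_j$ down to $O(L + \log\log\cdots\log m)$.
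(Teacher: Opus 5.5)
Your overall plan, iterating \cref{cor:weightedCLSparsifier} on its own output until the support is $\mathrm{poly}(\CL(C)/\eps)$ and then finishing with one round at accuracy $\eps/2$, is the paper's plan. However, the error schedule you commit to, $\eps_j=\eps/2^{j+1}$, does not give a bound independent of $m$. Accuracy is not the problem, since the errors sum to at most $\eps/2$ however many rounds you run. Size is.

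With this schedule the guaranteed recursion is $\log m_j \le O(L)+2j+2\log\log m_{j-1}+O(\log\log\log m_{j-1})$. Each round removes only about one exponential, so getting $\log m_j$ down to $O(L)$ needs about $\log^* m$ rounds. The additive $2j$ is harmless only while $j\ll\log m_{j-1}$, and your phase condition $\log m_{j-1}\ge C_0L$ does not guarantee that. Suppose $\log^* m\gg L$ (say $\CL(C)=1$, $\eps=1/2$, and $m$ a tall tower of twos). Once $j$ is comparable to $\log m_{j-1}$, the guaranteed size $Kn4^j\log^2 m_{j-1}(\log\log m_{j-1})^2/\eps^2$ exceeds $m_{j-1}$ itself. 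The guaranteed bound on $\log m_j$ then stalls at order $\log^* m$, the phase $\log m_{j-1}\ge C_0L$ never ends, and the last round only gives $O(n(L+\log^* m)^2(\log(L+\log^* m))^2/\eps^2)$. So your claim that $\log\log$-type shrinkage absorbs the additive $O(j)$ fails exactly in the regime the theorem is about. Your cruder fallback fails for the same reason: a constant number $k$ of rounds leaves a $\log^{(k)} m$ term, which still depends on $m$.

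The alternative you mention in passing, $\eps_j=\eps/(10\log m_{j-1})$, is exactly the paper's choice ($\eps'=\eps/(Q\log m)$). It is what makes the argument work, so it should be your main line rather than an aside. With it, each round outputs $O(n\log^4 m_{j-1}(\log\log m_{j-1})^2/\eps^2)$ coordinates, so $\log m_j\le O(L)+4\log\log m_{j-1}+O(\log\log\log m_{j-1})$, with no dependence on $j$.

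While $\log m_{j-1}\ge C_0L$, this at least halves $\log m$. Whenever also $\log m_j\ge C_0L$, it gives $\log m_{j-1}\ge 2^{\Omega(\log m_j)}$. Hence the phase terminates, and $\sum_j 1/\log m_{j-1}$ is dominated by its last term, which is at most $1/(C_0L)$. The total intermediate error is therefore $O(\eps/(C_0 L))$.

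The paper packages the same computation a little differently. It runs these rounds while $\log m\ge\CL(C)/\eps^2$, where the output is cleanly at most $\log^6 m$ and the tower decay is immediate. It then does one more round at accuracy $\eps/(Q\log m)$ to reach support $\mathrm{poly}(\CL(C)/\eps)$, and only then the final round at accuracy $\eps/2$.
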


\begin{proof}
    First, let us use $K$ to denote the hidden constant in the $O(\cdot)$ notation above. In the below, we assume that $m$ is at least a sufficiently large constant to begin with. Now, we consider two cases: 
   
    \begin{enumerate}
        \item When $m \geq 2^{\CL(C) / (\eps^2)}$, then after applying \cref{cor:weightedCLSparsifier} with $\eps ' = \eps / Q \log m$ (for a large constant $Q$), we obtain a $( 1 \pm \eps')$ sparsifier of $C$ with support $\leq \frac{K \cdot \CL(C) \cdot \log^4(m) \cdot \log\log(m)^2}{\eps^2} \leq \log^6(m)$. 
        
        \item When $m < 2^{\CL(C) / (\eps^2)}$, we sparsify $C$ once with parameter $\eps' = \eps / Q \log(m)$, obtaining a code $C'$ which is a $(1 \pm \eps')$ sparsifier of $C$ with support $\leq \frac{K \cdot \CL(C) \cdot \log^4(m) \cdot \log\log(m)^2}{\eps^2} \leq K \cdot \CL(C)^{7} / \eps^{14}$. Then, we sparsify $C'$ with parameter $\eps / 2$, and obtain a code $C''$ which is a $(1 \pm \eps/2)$ sparsifier of $C'$, and retains at most
        \[
      \frac{K \cdot \CL(C) \cdot \log^2(\CL(C)/\eps) \cdot \log\log(\CL(C)/\eps)^2}{\eps^2}
        \]
        many coordinates.
    \end{enumerate}

    Thus, given an arbitrary codes $C$, we apply case (1) $\ell = O(\log^*(m))$ many times, yielding code $C^{(1)}, \dots C^{(\ell)}$. Then, we apply case (2) to $C^{(\ell)}$, yielding a code $C''$ which retains only 
    \[
     \leq \frac{K \cdot \CL(C) \cdot \log^2(\CL(C)/\eps) \cdot \log\log(\CL(C)/\eps)^2}{\eps^2}
    \]
    many coordinates.

    All that remains is to show that $C''$ is a $(1 \pm \eps)$ sparsifier of $C$. This follows by composition of the sparsifier accuracy: we let $m^{(i)}$ denote the support size of the $i$th sparsifier we create in the above chain (with $m^{(0)}$ being the starting value of $m$). Then, $C^{(1)}$ is a $(1 \pm \eps/ Q \log(m^{(0)}))$ sparsifier of $C$, $C^{(2)}$ is a $(1 \pm \eps/ \log(m^{(1)}))$ sparsifier of $C^{(1)}$, and so on. In particular, this implies that $C^{(\ell)}$ is a 
    \[
    \prod_{i = 0}^{\ell-1} (1 \pm \eps / Q \log(m^{(i)})) 
    \]
    sparsifier of $C$. Thus $C^{(\ell)}$ is a 
    \[
    \left (1 \pm \eps \cdot O\left (\sum_{i = 0}^{\ell-1} \frac{1}{Q \cdot \log(m^{(i)})} \right ) \right ) = \left (1 \pm \eps /10 \right )
    \]
    sparsifier of $C$, where we have chosen $Q$ to be a large enough constant. In particular, here we are using the fact that, letting $q = \log(m^{(\ell-1)})$,
    \[
    \sum_{i = 0}^{\ell-1} \frac{1}{\log(m^{(i)})} \leq \frac{1}{q} + \frac{1}{2^{q^{1/6}}} + \frac{1}{2^{\left (2^{q^{1/6}} \right )^{1/6}}} + \dots = O(1 / q) = O(1).
    \]
    This relation follows from the fact that $m^{(i)} \leq \log^6(m^{(i-1)})$ as established above. 
    
    Now $C'$ is a $(1 \pm \eps / Q)$ sparsifier of $C^{(\ell)}$, which implies that $C'$ is a $(1 \pm \eps /10) \cdot (1 \pm \eps/Q) \in (1 \pm \eps/4)$ sparsifier of $C$.

    Finally, $C''$ is a $(1 \pm \eps /2 )$ sparsifier of $C'$, and thus by composition, $C''$ is a $(1 \pm \eps)$ sparsifier of $C$, as we desire (note, here we are using that $\eps$ is a sufficiently small constant).
\end{proof}

\section{Conclusion}\label{sec:conclusion}

In this paper, we constructed  sparsifiers for arbitrary set systems of near-optimal size. In particular, we simplify the proof strategy of Brakensiek and Guruswami~\cite{brakensiek2025redundancy} by presenting a contraction-based counting bound which results in an overall simpler proof in addition to better asymptotics. Furthermore, by recursively applying our sparsifier, our bound only depends on the accuracy $\eps$ and the chain length of the underlying set system. We conclude this paper with an exciting open direction and our conjecture regarding it.

\textbf{Truly linear sparsifiers.} A celebrated result of Batson, Spielman and Srivastava~\cite{BSS09} states that graphs on $n$ vertices have $(1 \pm \eps)$ cut sparsifiers with $O(n/\eps^2)$-edges. Furthermore, such asymptotics are known to be tight up to a constant factor~\cite{carlson2019optimal}. Proving an analogue of \cite{BSS09} for arbitrary set systems seems quite difficult as there is no clear spectral analogue of set system sparsification (however, see \cite{KPS25spectral}). Currently, the most general extensions of \cite{BSS09} are to the regime of \emph{PSD matrix sparsification} in the work of \cite{silva2015sparse} (which even includes arity $3$ \emph{hypergraph} cut sparsification).
Nonetheless, we boldly conjecture that analogous constructions should exist in general.

\begin{conjecture}\label{conj:CL-linear}
     For a set system $\mathcal S \subseteq 2^{[m]}$, weights $w: [m] \rightarrow \R_{\geq 0}$ and $\eps > 0$, there exists a $(1 \pm \eps)$ sparsifier of $\mathcal S$ which retains only 
     \[
     O \left ( \frac{ \CL(\mathcal S)}{\eps^2} \right )
     \]
     many coordinates.
\end{conjecture}

Of note, even proving Conjecture~\ref{conj:CL-linear} for linear code sparsification would be a significant result.

\section*{Acknowledgments}

We thank anonymous reviewers for many helpful comments improving the presentation of this paper. We also thank an anonymous reviewer of \cite{brakensiek2025redundancy} whose comments partially inspired \cref{thm:main}. A. P. thanks Sanjeev Khanna and Madhu Sudan for helpful conversations. 

\bibliographystyle{alpha}
\bibliography{references}

\appendix

\section{Proof of Corollary~\ref{cor:weightedCLSparsifier}}\label{appendix}

The specific method we use for proving Corollary~\ref{cor:weightedCLSparsifier} closely follows the methodology in Section 8 of \cite{brakensiek2025redundancy} (cf. Theorem~8.4). To start, we prove a version of this corollary in the setting where all the weights are bounded:

\begin{corollary}\label{cor:BoundedWeightedCLSparsifier}
        For a code $C \subseteq \zo^m$, weights $w: [m] \rightarrow \R_{\geq 0}$ such that $w(i) \leq m^3$ and $\eps > 0$, there exists a $(1 \pm \eps)$ code sparsifier of $C$ which retains only $O \left ( \frac{\CL(C) \log^2(m) (\log\log(m))^2 }{\eps^2}\right )$ many coordinates.
\end{corollary}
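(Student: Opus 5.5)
Because the weights are bounded by $m^3$, I will reduce to the unweighted result \cref{thm:CLSparsifier} by replicating coordinates according to their (rounded) weights.

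First, discard tiny weights. Write $W = \max_i w(i)$. Any codeword $c$ with $\langle w, c\rangle > 0$ has $\langle w, c \rangle \geq \min_{i \in \supp(c)} w(i)$. Tiny weights cannot simply be dropped, though, because a codeword may be supported only on tiny coordinates. So I will split by scale instead. Partition the coordinates into weight classes $A_j = \{i : w(i) \in (2^{j-1}, 2^j]\}$. I will rescale so that the minimum nonzero weight is $\geq 1$; the ratio of weights is what matters, so I take the hypothesis to mean that weights lie in $[1, m^3]$ after scaling.

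Next, round each weight to a multiple of $\eps/(10)$ of its own size: set $w'(i) = \lceil w(i) \cdot 10/\eps \rceil$, an integer in $[1, 10m^3/\eps + 1]$. Then $\langle w', c\rangle \in (1\pm \eps/10)\cdot (10/\eps)\langle w, c\rangle$ for every $c$. Now form the unweighted code $C^\star \subseteq \zo^{M}$ with $M = \sum_i w'(i) \leq \mathrm{poly}(m/\eps)$, obtained by duplicating coordinate $i$ exactly $w'(i)$ times. Its unweighted weights equal $\langle w', c\rangle$.

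The key check is $\CL(C^\star) = \CL(C)$. A chain in $C^\star$ uses distinct coordinates $a(j)$ with the triangular pattern. Two copies of the same original coordinate cannot both appear: if $a(j), a(k)$ with $j<k$ were copies of the same $i$, then $c(j)_{a(k)} = c(j)_{a(j)} = 1$, contradicting the chain condition. Projecting to original coordinates therefore gives a chain in $C$, and the converse is immediate.

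Apply \cref{thm:CLSparsifier} to $C^\star$ with accuracy $\eps/10$. This gives a reweighting supported on $O(\CL(C)\log^2 M(\log\log M)^2/\eps^2)$ copies. Collapse it back by giving original coordinate $i$ the sum of the weights of its copies, and scale by $\eps/10$. The support does not grow under this collapse, and the errors compose to $(1\pm\eps)$.

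The main obstacle is that $\log M = O(\log m + \log(1/\eps))$ rather than $O(\log m)$. I would handle this in one of two ways. If $\eps \geq 1/m$, then $\log M = O(\log m)$ and the bound is as stated. If $\eps < 1/m$, then $m\eps^{-2} > m$, so keeping all $m$ coordinates with their original weights already meets the bound trivially. Both cases give the claimed $O(\CL(C)\log^2 m(\log\log m)^2/\eps^2)$.
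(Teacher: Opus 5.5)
Your proposal is correct and follows essentially the same route as the paper. Both arguments rescale so the weights lie in $[1,m^3]$, duplicate each coordinate roughly $w(i)/\eps$ times, and apply \cref{thm:CLSparsifier} to the resulting unweighted code of length $\mathrm{poly}(m)$, handling very small $\eps$ by keeping all $m$ coordinates. Your explicit check that duplication does not increase chain length fills in a step the paper leaves implicit; the weight classes $A_j$ you introduce are never used, and in the last case the inequality you want is $\eps^{-2} > m^2 \geq m$.
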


\begin{proof}
    We adapt the proof of Lemma~8.15 in \cite{brakensiek2025redundancy}. First, if $\eps \leq 1 / \sqrt{m}$, then we simply return all the coordinates of the code $C$, as $m \leq 1 / \eps^2$. So, for the rest of the proof, we assume that $\eps > 1 / \sqrt{m}$. We will also assume that $\min_i w(i) = 1$, as otherwise we can simply rescale the weights by $\min_i w(i)$ to ensure this is the case (if the min weight is $0$, we can simply delete these coordinates).

    The key intuition is now that the weights are bounded, we will simply duplicate the coordinates of the code a number of times proportional to their weight, thus transforming the weighted code into an unweighted code. With this unweighted code, we can then invoke \cref{thm:CLSparsifier}.

    To make this unweighted code for each $i \in [m]$, we define $b(i) = \lfloor \frac{2w(i)}{\eps}\rfloor \leq 2m^4$ to be the number of times we duplicate coordinate $i$. Thus, we create a new code $\widetilde{C} \subseteq \zo^{\widetilde{m}}$ where $\widetilde{m} = \sum_{i = 1}^m b(i)$ where coordinate $i$ of $C$ is replaced with $b(i)$ unweighted copies in $\widetilde{C}$. Now, we claim that for any codeword $c \in C$, its corresponding version $\widetilde{c}$ satisfies $  \frac{\eps}{2} \cdot  \sum_{i = 1}^{\widetilde{m}} \widetilde{c}_i \in (1 \pm \eps / 2) \cdot \sum_{i = 1}^m w(i) c_i$. Indeed, this follows because
    \[
    \frac{\eps}{2} \cdot \sum_{i = 1}^{\widetilde{m}} \widetilde{c}_i = \sum_{i = 1}^{m} \frac{\eps}{2} \cdot \Bigl\lfloor \frac{2w(i)}{\eps}\Bigr\rfloor \cdot c_i \in \sum_{i = 1}^{m} \frac{\eps}{2} \cdot \left[\frac{2w(i)}{\eps}-1, \frac{2w(i)}{\eps} \right ] \cdot c_i
    \]
    \[
    \in \sum_{i = 1}^{m} [1 - \eps/2, 1] \cdot w_i \cdot c_i \in (1 \pm \eps / 2) \cdot \sum_{i = 1}^m w(i) c_i.
    \]

    Thus, to conclude the proof, we must only construct this now \emph{unweighted} code $\widetilde{C}$, sparsify it to accuracy $(1 \pm \eps/3)$ using \cref{thm:CLSparsifier} to create a new, weighted code $\widehat{C}$, and then return $\frac{\eps}{2} \cdot \widehat{C}$. By the composition of sparsifier accuracy, this will be a $(1 \pm \eps/2) (1 \pm \eps/3)$ sparsifier of $C$, which constitutes a $(1 \pm \eps)$ sparsifier. The number of coordinates retained in the sparsifier is bounded by $O \left ( \frac{\CL(C) \log^2(m) (\log\log(m))^2 }{\eps^2}\right )$, as we are sparsifying a code with $\widetilde{m} = \mathrm{poly}(m)$ many coordinates.
\end{proof}

With this corollary established, we can now proceed to creating sparsifiers for codes with arbitrary weights.

\begin{proof}[Proof of \cref{cor:weightedCLSparsifier}]
We fix the code $C \subseteq \{0,1\}^m$, the weights $w : [m] \to \R_{> 0}$, and $\eps \in (0,1)$. We assume WLOG that $\CL(C) \geq 1$ and that $\eps \geq  8 / \sqrt{m}$ (otherwise, the code $C$ already satisfies the desired size bound). 

We group the coordinates $[m]$ according to their weights. Borrowing notation from the proof of Theorem~8.4 of \cite{brakensiek2025redundancy}, we define a function $t : [m] \to \Z$ such that
\[
    t(i) := \left\lfloor \frac{\log w(i)}{3\log m}\right\rfloor,
\]
and for each coordinate $i \in [m]$, we assign its group to be $t(i)$. For all $t \in \mathbb Z$, we let $I_t \subseteq [n]$ denote the set of all coordinates in group $t$. For each codeword $c \in C$, we then say that the \emph{type} of the codeword is the maximum group for any of its nonzero coordinates, i.e., 
\[
\mathrm{type}(c) := \max_{i \in \supp(c)} t(i).
\]
For all $t \in \mathbb Z$, we let $C_t$ denote the set of all $c \in C$ with type $t$.

The key observation, as used in \cite{KPS24, khanna2025efficient, brakensiek2025redundancy} is that one must only consider sparsifying a codeword $c \in C_t$ with respect to the coordinates $I_t$; this is because the vast majority of its weight is contained in these coordinates. Furthermore, within the set of coordinates $I_t$, all weights are related by a factor of at most $m^3$.

Define $t \in \mathbb Z$ to be \emph{proper} if there exists $z \in (C_{t} \cup C_{t+1})|_{I_t}$ which has at least one coordinate equal to $1$. For proper $t \in \mathbb Z$, let $\widetilde{w}_t : [m] \to \R_{\ge 0}$ be an $(\eps/2)$-sparsifier of $(C_{t} \cup C_{t+1})|_{I_t} \cup \{\mathbf{1}[\supp(I_t)]\}$. Note that $\CL(C_{t} \cup C_{t+1})|_{I_t} \cup \{\mathbf{1}[\supp(I_t)]\}) \ge 1$, so 
\begin{align*}
\CL((C_{t} \cup C_{t+1})|_{I_t} \cup \{\mathbf{1}[\supp(I_t)]\}) &\le \CL(C_{t} \cup C_{t+1})|_{I_t} \cup \{\mathbf{1}[\supp(I_t)]\}) +1\\&\le 2\CL(C_{t} \cup C_{t+1})|_{I_t} \cup \{\mathbf{1}[\supp(I_t)]\}).
\end{align*}
Thus, by \cref{cor:BoundedWeightedCLSparsifier}, we can ensure that
\begin{align}
    \left | {\supp(\widetilde{w}_t)}  \right | \leq O \left (\CL((C_{t} \cup C_{t+1})|_{I_t})\log^2(m)\log\log(m)^2/\eps^2 \right ).\label{eq:w-t}
\end{align}

Let $\widetilde{w} : [m] \to \R_{\ge 0}$ be the sum of these sparsifiers $\{\widetilde{w}_t : t \in \mathbb Z, t \text{ proper}\}$. This is well-defined as any coordinate appears in exactly one $I_t$ and thus can be nonzero for only $\widetilde{w}_t$. It remains only to show that $\widetilde{w}$ is indeed a $( 1\pm \eps)$ sparsifier for $C$ with weights $w$ and that it satisfies our desired sparsity bound. Adapting a computation of \cite{brakensiek2025redundancy}, we have for all $c \in C$ with $t := \mathrm{type}(c)$ that
\begin{align*}
\langle \widetilde{w}, c\rangle &= \langle \widetilde{w}_{t-1}, c|_{I_{t-1}}\rangle + \langle \widetilde{w}_t, c|_{I_t}\rangle +\sum_{t'\le t-2} \langle \widetilde{w}_{t'}, c|_{I_{t'}}\rangle\\
&\in [1-\eps/2,1+\eps/2] \cdot \sum_{i \in I_{t-1} \cup I_t} w(i)c_i + \sum_{t'\le t-2} \langle \widetilde{w}_{t'}, c|_{I_{t'}}\rangle\\
&= [1-\eps/2, 1+\eps/2] \cdot \left[\langle w, c\rangle - \sum_{i \in \supp(c) \setminus (I_{t-1} \cup I_t)} w(i)\right] + \sum_{t'\le t-2} \langle \widetilde{w}_{t'}, c|_{I_{t'}}\rangle.
\end{align*}

First, we argue that $\sum_{t'\le t-2} \langle \widetilde{w}_{t'}, c|_{I_{t'}}\rangle$ is small. For this, suppose for the sake of contradiction that for some $t' \leq t-2$, there is a choice of $i \in I_{t'}$ such that $\widetilde{w}(i) \geq 2m^{3t'+4}$. Then, observe that for the codeword $\mathbf{1}[\supp(I_{t'})]$, its weight reported by the sparsifier for $I_{t'}$ is at least $2m^{3t'+4}$, which is $\geq 2 \cdot \sum_{i \in I_{t'}} w(i)$, as each $w(i) \leq m^{3t' + 3}$ (and $|I_{t'}| \leq m$). Thus, for this codeword $\mathbf{1}[\supp(I_{t'})]$, $\widetilde{w}$ restricted to $I_{t'}$ (i.e,. $\widetilde{w}_{t'}$) \emph{would not} have been a $(1 \pm \eps)$ sparsifier. So, we see that $\widetilde{w}(i) \leq 2m^{3t'+4}$ for each $i \in I_{t'}$, and thus \[
\sum_{t'\le t-2} \langle \widetilde{w}_{t'}, c|_{I_{t'}}\rangle \leq m \cdot 2m^{3t'+4} \leq 2m^{3t'+5} \leq 2m^{3t-1}.
\]

Since $t = \mathrm{type}(c)$, we have that $\langle w, c\rangle \ge m^{3t}$. This immediately implies that $\sum_{t'\le t-2} \langle \widetilde{w}_{t'}, c|_{I_{t'}}\rangle \leq \frac{2}{m} \cdot \langle w, c\rangle \leq \frac{\eps}{4} \cdot \langle w, c\rangle$ since $\eps \ge 8/\sqrt{m}$.

Further, for all $i \in \supp(c) \setminus (I_{t-1} \cup I_t)$, we have that $w(i) < m^{3t-3}$. Therefore,  the total contribution of $w(i)$ for $i \in \supp(c) \setminus (I_{t-1} \cup I_t)$ is at most $m \cdot m^{3t-3} \le \frac{\eps}{4} \langle w, c\rangle$.
So, 
\begin{align*}
\langle \widetilde{w}, c\rangle &\in [1-\eps/2, 1+\eps/2] \cdot [1-\eps/4, 1] \cdot \langle w, c\rangle  + [0, \eps/4] \cdot \langle w, c\rangle\subseteq [1-\eps, 1+\eps] \langle w, c\rangle ,
\end{align*}
as desired. It thus suffices to bound $\left |{\supp(\widetilde{w})} \right |$. By (\cref{eq:w-t}), we have that
\begin{align}
\left |{\supp(\widetilde{w})} \right | \leq \sum_{t\text{ proper}} \left |{\supp(\widetilde{w}_t)} \right | \leq \sum_{t \in \mathbb Z} O \left ( \CL((C_{t} \cup C_{t+1})|_{I_t})\log^2(m)\log\log(m)^2/\eps^2 \right ).\label{eq:w-t2}
\end{align}
Analogous to Claim~8.16 of \cite{brakensiek2025redundancy}, we claim that 
\[
\sum_{t\in \mathbb Z} \CL((C_{t} \cup C_{t+1})|_{I_t}) \leq 2 \CL(C).
\]
To see this, let $A_1, \dots A_{\ell} \subseteq I_1, \dots I_{\ell}$ and $B_1, \dots B_{\ell} \subseteq (C_1 \cup C_2), (C_2 \cup C_3) \dots (C_{\ell}, C_{\ell+1})$ denote witnesses for the chain lengths of $\CL((C_{t} \cup C_{t+1})|_{I_t}): t \in [\ell]$. The key observation is that $A_{\mathrm{odd}} = A_1 \cup A_3 \cup A_5 \cup \dots , B_{\mathrm{odd}} = B_1 \cup B_3 \cup B_5 \cup \dots$ and $A_{\mathrm{even}}=A_2 \cup A_4 \cup A_6 \cup \dots , B_{\mathrm{even}} = B_2 \cup B_4 \cup B_6 \cup \dots$ are \emph{both} valid witnesses of the chain length of $C$. To see why, we observe that because $B_i \subseteq C_i \cup C_{i+1}$, it must be the case that every codeword $c \in B_i$ satisfies $c|_{I_{\geq i+2}} = 0$. Thus, for any coordinate $p \in A_{\geq i+2}$, we see that $c_p = 0$. Importantly then, because $A_i, B_i$ is a chain, and $A_{i+2}, B_{i+2}$ is defined on a disjoint support, their concatenation still remains a chain as per \cref{clm:chainlengthdecrease}.

With this, we then immediately have that 
\[
\sum_t \CL((C_{t} \cup C_{t+1})|_{I_t}) = \sum_{t: t = 0\!\!\!\!\mod 2} \CL((C_{t} \cup C_{t+1})|_{I_t}) + \sum_{t: t = 1\!\!\!\!\mod 2} \CL((C_{t} \cup C_{t+1})|_{I_t})
\]
\[
\leq \CL(C) + \CL(C) \leq 2 \CL(C),
\]
and thus by (\cref{eq:w-t2}) we can conclude that 
\[
\left |{\supp(\widetilde{w})} \right | \leq O \left ( \CL(C)\log^2(m)\log\log(m)^2/\eps^2 \right ),
\]
as desired.
\end{proof}

\end{document}